\begin{document}

\title{Linear recurrence relations in $Q$-systems and difference $L$-operators}
\author{Chul-hee Lee}
\address{School of Mathematics and Physics, The University of Queensland, Brisbane QLD 4072, Australia}
\email{c.lee1@uq.edu.au}
\date{December 2014}
%added the following
\thanks{This work was supported by the National Research Foundation of Korea (grant \# 2014-021261) and the Australian Research Council.}
%{The author is supported by the Australian Research Council.}
%\date{December 2014, last modified on \today}
%\subjclass[2000]{Primary 81R50, 81R10}
%\keywords{Kirillov-Reshetikhin modules, Q-systems, linear recurrence relations, difference L-operators}
%\subjclass[2000]{Primary 81R50, 81R10; Secondary 81R50, 81R10}
%PACS number: 02.20.Uw, 11.25.Hf
% 02.20.Uw Quantum groups 
% 11.25.Hf Conformal field theory, algebraic structures 
%Mathematics Subject Classification: 81R10, 81R50
%81R50, Quantum groups and related algebraic methods
%81R10, Infnite-dimensional groups and algebras motivated by physics, including Virasoro, Kac-Moody, W-algebras and other current algebras and their representations
\begin{abstract}
%befofore: We study linear recurrence relations in the character solutions of $Q$-systems obtained from the Kirillov-Reshetikhin modules. We formulate conjectures in the general setting and use difference $L$-operators to establish them in some cases for classical types. We also provide a conjectural description of these relations in exceptional types.
We study linear recurrence relations in the character solutions of $Q$-systems obtained from the Kirillov-Reshetikhin modules. We explain how known results on difference $L$-operators lead to a uniform construction of linear recurrences in many examples, and formulate certain conjectural properties predicted in general by this construcion.
\end{abstract}
\maketitle
\section{Introduction}
%before: In this paper we study linear recurrence relations in $Q$-systems, more specifically, those relations among the characters of the Kirillov-Reshetikhin modules. Although there have been some papers dealing with this topic \cite{MR1890924,dk2010q,dfk,2009arXiv0905.3776N}, a uniform treatment in the general setting has not been given to the best of our knowledge. The main goal is to formulate conjectures in a precise form and relate this with the study of difference $L$-operators.
In this paper we study linear recurrence relations in $Q$-systems, more specifically, those relations among the characters of the Kirillov-Reshetikhin modules. We use results on difference $L$-operators \cite{MR1890924} to construct them, extending previous results \cite{dk2010q,dfk,2009arXiv0905.3776N} uniformly across all classical types. This construction predicts certain structural properties of these recurrences, formulated in Conjecture \ref{mainconj2}, which we are able to check experimentally for more general examples.

Let $\mathfrak{g}$ be a complex simple Lie algebra of rank $r$. Let $I=\{1,\cdots, r\}$ and $\{\alpha_a|a\in I\}$ be the set of simple roots. See Table \ref{Dynkinlab} for their enumeration. Let $t_a=2/(\alpha_a,\alpha_a)\in \{1,2,3\}$ for each $a\in I$, where $\theta$ is the highest root and $(\cdot,\cdot)$ denotes the standard bilinear form on the root lattice $Q=\oplus_{i\in I}\mathbb{Z}\alpha_i$ normalized by the condition $(\theta,\theta)=2$. Let $C$ be the Cartan matrix with entries $C_{ab}=(\alpha_a^{\vee},\alpha_b)$ where $\alpha_a^{\vee}=t_a\alpha_a$. Let us denote the fundamental weights by $\omega_a$ for each $a\in I$. Let $P=\oplus_{i\in I}\mathbb{Z}\omega_i$ be the weight lattice. 
%We put $y_a=e^{\omega_a}$ and denote the ring of Laurent polynomials in $\{y_{a}|a\in I\}$ by $\mathbb{Z}[y^{\pm}_{a}]_{a\in I}$. For a representation $V$ of $\mathfrak{g}$, we will denote its character by $\chi(V)\in \mathbb{Z}[y^{\pm}_{a}]_{a\in I}$.
We put $y_a=e^{\omega_a}$ and denote the ring of Laurent polynomials in $\{y_{a}|a\in I\}$ by $\mathbb{Z}[y^{\pm}_{a}]_{a\in I}$ or $\mathbb{Z}[P]$.
%%%%%%%%%%%%%%%%%%%%%%%%%%%%%%%%%%%%%%%
%For a representation -> for a finite-dimensional representation
%%%%%%%%%%%%%%%%%%%%%%%%%%%%%%%%%%%%
For a representation $V$ of $\mathfrak{g}$, we will denote its character by $\chi(V)\in \mathbb{Z}[y^{\pm}_{a}]_{a\in I}$, which is actually an element of $\mathbb{Z}[P]^{W}$ where $W$ denotes the Weyl group.
An irreducible representation of highest weight $\lambda$ will be denoted by $L(\lambda)$. Let $q\in \mathbb{C}^{\times}$ be not a root of unity. We shall use the same notation as above for $U_q(\mathfrak{g})$.

For each $a \in I$,  $m\in \mathbb{Z}_{\ge 0}$, and $u\in \mathbb{C}$, there exists a finite-dimensional irreducible $U_{q}(\hat{\mathfrak{g}})$-module $W^{(a)}_{m}(u)$ called the Kirillov-Reshetikhin module. We get a finite-dimensional $U_{q}(\mathfrak{g})$-module ${\operatorname{res}}\, W^{(a)}_m(u)$ by restriction. As we can ignore the dependence on $u$ as a $U_{q}(\mathfrak{g})$-module, we will simply write it as ${\operatorname{res}}\, W^{(a)}_m$ throughout the paper. Let $Q_m^{(a)}=\chi(\operatorname{res} W_m^{(a)})$ for each $a \in I$ and $m\in \mathbb{Z}_{\ge 0}$. We sometimes write $Q_1^{(a)}$ as $q_a$.

We know that $\{Q^{(a)}_m|a\in I,m\in \mathbb{Z}_{\ge 0}\}$ satisfy the following system of equations
\begin{equation}
(Q^{(a)}_m)^2 = Q^{(a)}_{m+1}Q^{(a)}_{m-1} + 
\prod_{b : C_{ab}\neq 0} \prod_{k=0}^{-C_{a b}-1}
Q^{(b)}_{\lfloor\frac{C_{b a}m - k}{C_{a b}}\rfloor}\quad a\in I, m \ge 1
\label{Qsys}
\end{equation}
where $\lfloor \cdot \rfloor$ denotes the floor function. 
This was conjectured in \cite{Kirillov1990} and proved in \cite{MR1993360,MR2254805}. We call (\ref{Qsys}) the unrestricted $Q$-system of type $\mathfrak{g}$.

\begin{table}\label{Dynkinlab}
\begin{align*}
A_l &&& 
\begin{tikzpicture}[start chain]
\dnode{1}
\dnode{2}
\dydots
\dnode{l-1}
\dnode{l}
\end{tikzpicture}
\\
B_l &&&
\begin{tikzpicture}[start chain]
\dnode{1}
\dnode{2}
\dydots
\dnode{l-1}
\dnodenj{l}
\path (chain-4) -- node[anchor=mid] {\(\Rightarrow\)} (chain-5);
\end{tikzpicture}
\\
C_l &&&
\begin{tikzpicture}[start chain]
\dnode{1}
\dnode{2}
\dydots
\dnode{l}
\dnodenj{l}
\path (chain-4) -- node[anchor=mid] {\(\Leftarrow\)} (chain-5);
\end{tikzpicture}
\\
D_l &&&
\begin{tikzpicture}
\begin{scope}[start chain]
\dnode{1}
\dnode{2}
\node[chj,draw=none] {\dots};
\dnode{l-2}
\dnode{l-1}
\end{scope}
\begin{scope}[start chain=br going above]
\chainin(chain-4);
\dnodebr{l}
\end{scope}
\end{tikzpicture}
\\
E_6 &&&
\begin{tikzpicture}
\begin{scope}[start chain]
\foreach \dyni in {1,...,5} {
\dnode{\dyni}
}
\end{scope}
\begin{scope}[start chain=br going above]
\chainin (chain-3);
\dnodebr{6}
\end{scope}
\end{tikzpicture}
\\
E_7 &&&
\begin{tikzpicture}
\begin{scope}[start chain]
\foreach \dyni in {1,...,6} {
\dnode{\dyni}
}
\end{scope}
\begin{scope}[start chain=br going above]
\chainin (chain-3);
\dnodebr{7}
\end{scope}
\end{tikzpicture}
\\
E_8 &&&
\begin{tikzpicture}
\begin{scope}[start chain]
\foreach \dyni in {1,...,7} {
\dnode{\dyni}
}
\end{scope}
\begin{scope}[start chain=br going above]
\chainin (chain-3);
\dnodebr{8}
\end{scope}
\end{tikzpicture}
\\
F_4 &&&
\begin{tikzpicture}[start chain]
\dnode{1}
\dnode{2}
\dnodenj{3}
\dnode{4}
\path (chain-2) -- node[anchor=mid] {\(\Rightarrow\)} (chain-3);
\end{tikzpicture}
\\
G_2 &&&
\begin{tikzpicture}[start chain]
\dnodenj{1}
\dnodenj{2}
\path (chain-1) -- node {\(\Rrightarrow\)} (chain-2);
\end{tikzpicture}
\end{align*}
\caption{Dynkin diagrams and the enumeration of simple roots}
\end{table}

\subsection*{Statement of main conjectures}
%before : We state our two main conjectures on linear recurrence relations in the sequence $\{Q_m^{(a)}\}_{m\geq 0}$ for fixed $a\in I$. The first one is about the existence of such relations.
Our main interest is the following conjecture on linear recurrence relations in the sequence $\{Q_m^{(a)}\}_{m\geq 0}$ for fixed $a\in I$.
\begin{conjecture}\label{mainconj}
For each $a\in I$, the sequence $\{Q_m^{(a)}\}_{m\geq 0}$ satisfies a linear recurrence relation. More precisely, there exist a positive integer $\ell_a$ and $C_k^{(a)}\in \mathbb{Z}[y^{\pm}_{a}]_{a\in I},\, k=0,\cdots, \ell_a$ such that the following relation holds
\begin{equation}
\sum_{k=0}^{\ell_a}(-1)^k C_k^{(a)} Q_{n-k}^{(a)}=0\label{CQ}
\end{equation}
for any sufficiently big $n>0$. Here $C_0^{(a)}=1$ and $C_{\ell_a}^{(a)}=\pm 1$. Moreover, $C_k^{(a)}\in\mathbb{Z}[q_1,\cdots, q_r],\, k=0,\cdots, \ell_a$.
\end{conjecture}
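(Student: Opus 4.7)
The plan is to imitate, in a uniform way across all types, the $L$-operator argument that is known in the classical cases. The recurrence \eqref{CQ} can be packaged as $\mathcal{L}^{(a)}\cdot Q_{n}^{(a)} = 0$ for a fixed polynomial $\mathcal{L}^{(a)}(\zeta) = \sum_{k=0}^{\ell_a}(-1)^k C_k^{(a)} \zeta^k$ in the backward shift $\zeta\colon Q_n^{(a)} \mapsto Q_{n-1}^{(a)}$. Since $\mathbb{Z}[P]^W$ is a polynomial ring in $q_1,\ldots,q_r$ by Chevalley's theorem, it suffices to produce such an operator whose coefficients are Weyl-invariant, with $C_0^{(a)} = 1$ and $C_{\ell_a}^{(a)} = \pm 1$.

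I would proceed in three steps. First, for each $a \in I$, construct, via the difference $L$-operator machinery of \cite{MR1890924}, a polynomial $\mathcal{L}^{(a)}(\zeta)$ of appropriate degree $\ell_a$ whose coefficients are the canonical symmetric ``integrals of motion'' attached to the fundamental representation $L(\omega_a)$. Second, identify those coefficients with elementary symmetric functions in the Weyl orbit of $\omega_a$ (with an appropriate weighted refinement when $\omega_a$ is not minuscule); this forces them to lie in $\mathbb{Z}[P]^W = \mathbb{Z}[q_1,\ldots,q_r]$, and the top coefficient $C_{\ell_a}^{(a)}$ is then $\pm 1$ because $\sum_{\mu\in W\cdot\omega_a}\mu = 0$ (the only Weyl-invariant weight is the origin). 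Third, verify that $\mathcal{L}^{(a)}$ annihilates the sequence $\{Q_m^{(a)}\}$ for large $m$ by identifying $Q_m^{(a)}$ with a matrix element or transfer-matrix trace on which a Cayley-Hamilton-type identity for $\mathcal{L}^{(a)}$ applies.

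In type $A_r$, the third step reduces to the Jacobi-Trudi formula $Q_m^{(a)} = s_{(m^a)} = \det(h_{m-i+j})_{1 \le i,j \le a}$ combined with the standard linear recursion satisfied by $h_k$ in $k$; here $\ell_a = \binom{r+1}{a}$ and $\mathcal{L}^{(a)}(\zeta) = \prod_{|S| = a}(\zeta - \prod_{i \in S} x_i)$, whose expanded coefficients are exactly the characters $\chi(\wedge^k L(\omega_a))$. In types $B_r, C_r, D_r$, the quantum Jacobi-Trudi / tableaux-sum formulas of Bazhanov-Reshetikhin and Cherednik play the analogous role, yielding the recurrences already established in \cite{dk2010q, dfk, 2009arXiv0905.3776N}.

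The hardest part will be the exceptional types $E_6, E_7, E_8, F_4, G_2$, where no uniform determinantal formula for $Q_m^{(a)}$ is available. There the $L$-operator construction must be carried out intrinsically, and the annihilation step is likely to require a spectral-decomposition argument using the universal $R$-matrix of $U_q(\hat{\mathfrak{g}})$ or an abstract fusion construction. A secondary obstacle is pinning down the exact value of $\ell_a$ and proving $C_{\ell_a}^{(a)} = \pm 1$ survives any cancellations: the construction produces a natural upper bound on $\ell_a$, but matching the sharper predictions of Conjecture \ref{mainconj2} will require additional input beyond the generic $L$-operator framework.
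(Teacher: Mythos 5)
Your outline reproduces the strategy the paper itself uses, but it does not prove the statement, and it is important to be clear that the paper does not prove it either: Conjecture \ref{mainconj} is left open in general, and is \emph{settled} only in the cases where a difference $L$-operator is actually available, namely $a=1$ in types $A_r$, $B_r$, $C_r$, $D_r$ (plus the long-known type-$A$ results of \cite{dk2010q,dfk,2009arXiv0905.3776N}). In those cases the argument is exactly the one you sketch in compressed form: restrict the known $L(u)$ and its explicit inverse $L(u)^{-1}=\sum_{m\ge 0}T^{(1)}_m(\cdot)D^m$ under the map $\operatorname{res}$, multiply, and read off \eqref{CQ}. The trouble is that each of your three steps presupposes input that is only known in precisely those cases. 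Step one assumes an $L$-operator attached to every node $a$ of every $\mathfrak{g}$; the paper states explicitly that such operators have been constructed only for certain $(\mathfrak{g},a)$, and for the exceptional types none exist, which is why the $E$, $F$, $G$ cases in Section \ref{samples} are presented as conjectural descriptions of $\Lambda_a$, $\Lambda_a'$, $\ell_a$ rather than theorems. Step three is the same issue in different clothing: the ``annihilation'' in the proven cases does not come from a Cayley--Hamilton identity but from the specific $TT$-relation encoded in the factorized inverse of $L(u)$, and that inverse formula is only known for the vector/first-fundamental node; for other nodes even in classical types the paper can offer only Conjecture \ref{Aconj} and its $D_r$ spin-node analogue. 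Deferring the exceptional types to ``a spectral-decomposition argument using the universal $R$-matrix'' or ``an abstract fusion construction'' names a hope, not a step.

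Two further points in your step two would fail as stated. First, the coefficients are not elementary symmetric functions in the Weyl orbit $W\cdot\omega_a$ in general: already for $B_r$, $a=1$ the set $\Lambda_1$ has $2r$ elements obtained by deleting a zero weight, the coefficients are alternating partial sums $C_k^{(1)}=\sum_{n\le k}(-1)^{k-n}\chi(\Lambda^n\operatorname{res}W_1^{(1)})$, and in $C_r$ one must also account for the $\Lambda_a'$ factor $(1-e^{\lambda}D^{t_a})$ of Conjecture \ref{mainconj2}; in $F_4$ and $G_2$ the conjectured $\Lambda_a$ is not the weight set of any single representation. Your argument that $C_{\ell_a}^{(a)}=\pm1$ because the orbit weights sum to zero only makes sense \emph{after} one knows the factorization \eqref{factor} with a $W$-stable multiset $\Lambda_a\cup\Lambda_a'$, i.e.\ after Conjecture \ref{mainconj2}, which is itself unproven outside the same list of cases. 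Second, the ``natural upper bound on $\ell_a$'' your construction would give cannot substitute for the exact orders the paper records (e.g.\ $\ell_1=27$ for $E_6$, $\ell_7=241$ for $E_8$, $\ell_4=74$ for $F_4$), which at present rest on numerical experiment, not on any $L$-operator. So the proposal should be read as a restatement of the paper's program together with its open problems, not as a proof of Conjecture \ref{mainconj}.
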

To avoid ambiguity, we will fix $\ell_a$ as the minimal one among those with the same property. 
%before: Our next conjecture gives a more structural way to describe the coefficients in (\ref{CQ}).
The connection we explore between linear recurrences and $L$-operators leads to the following structural properties of the coefficients $C_k^{(a)}$ in (\ref{CQ}). Let $D$ be an indeterminate.
\begin{conjecture}\label{mainconj2}
%before:There exist finite sets $\Lambda_a$ and $\Lambda'_a$ contained in $P$ such that 
%\begin{equation}
%\sum_{k=0}^{\ell_a}(-1)^k C_{k}^{(a)}D^k=\prod_{\lambda \in \Lambda_a}(1-e^{\lambda}D)\prod_{\lambda \in \Lambda'_a}(1-e^{\lambda}D^{t_a})\label{factor}
%\end{equation}
%When $t_a=1$, we put $\Lambda_a'=\emptyset$. We have $\omega_a\in \Lambda_a$.
There exist finite sets $\Lambda_a$ and $\Lambda'_a$ contained in $P$ such that
\begin{equation}
\sum_{k=0}^{\ell_a}(-1)^k C_{k}^{(a)}D^k=\prod_{\lambda \in \Lambda_a}(1-e^{\lambda}D)\prod_{\lambda \in \Lambda'_a}(1-e^{\lambda}D^{t_a})\label{factor}
\end{equation}
and $\omega_a\in \Lambda_a$. When $t_a=1$, we put $\Lambda_a'=\emptyset$.
\end{conjecture}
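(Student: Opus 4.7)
The plan is to produce the factorization (\ref{factor}) directly from the difference $L$-operator construction of \cite{MR1890924}, rather than first establishing Conjecture \ref{mainconj} and then factoring the minimal characteristic polynomial post hoc. Concretely, for each node $a \in I$ I would associate a polynomial difference operator
\[
\mathcal{L}^{(a)}(D) \;=\; \sum_{k=0}^{d_a}(-1)^k\, \widetilde{C}^{(a)}_k\, D^k, \qquad \widetilde{C}^{(a)}_k \in \mathbb{Z}[P]^W,
\]
built as a ``quantum determinant'' of an $L$-operator acting on an auxiliary fundamental module $V_a$ of $\mathfrak{g}$. The construction is designed so that $\mathcal{L}^{(a)}(D)$ annihilates $\{Q^{(a)}_m\}$ for all sufficiently large $m$; since any such annihilator is divisible by the minimal one from Conjecture \ref{mainconj}, it suffices to exhibit the factorization at the level of $\mathcal{L}^{(a)}(D)$ and then pass to the minimal factor.

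The second step is to factor $\mathcal{L}^{(a)}(D)$ using the spectral decomposition of the $L$-operator along the weights of $V_a$. For a long-root node ($t_a=1$) each weight $\lambda$ of $V_a$ contributes a commuting factor $(1-e^{\lambda}D)$, yielding the factorization with $\Lambda_a$ equal to the multiset of weights of $V_a$ and $\Lambda'_a = \emptyset$. The condition $\omega_a \in \Lambda_a$ is automatic because $L(\omega_a)$ is a constituent of $\operatorname{res} W^{(a)}_1$, hence of $V_a$. For a short-root node ($t_a>1$) the $L$-operator splits along the two root-length strata: weights lying in short Weyl orbits contribute linear factors $(1-e^{\lambda} D)$, while weights in long Weyl orbits come with a $t_a$-fold spectral degeneracy forced by the Frobenius-type twist intrinsic to the construction, collapsing to factors $(1-e^{\lambda} D^{t_a})$. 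This mechanism supplies the bipartite shape of (\ref{factor}).

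The main difficulty is two-fold. First, one must check that passing from the nonminimal $\mathcal{L}^{(a)}(D)$ to the minimal operator of Conjecture \ref{mainconj} preserves the product form, i.e.\ that any factor removed during minimalization already appears on the right-hand side of (\ref{factor}). This reduces to a nondegeneracy statement about the initial segment $\{Q^{(a)}_m\}_{0 \le m < d_a}$, which I would attempt via the linear independence in $\mathbb{Z}[P]^W$ of characters of the irreducible constituents of $\operatorname{res} W^{(a)}_m$ as $m$ varies. Second, the construction of \cite{MR1890924} is essentially uniform for classical types but only partial for the exceptional ones; pushing it through for $E_6, E_7, E_8, F_4$, and especially $G_2$ with its triple spectral degeneracy, is the principal structural obstacle. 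For those cases I therefore expect the proof to proceed case-by-case, using the explicit $Q$-system relations (\ref{Qsys}) as input and the $L$-operator framework merely as the source of the predicted sets $\Lambda_a,\Lambda'_a$ to verify.
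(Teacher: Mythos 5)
The statement you are trying to prove is a \emph{conjecture} in the paper: it is only established there in the cases where an explicit difference $L$-operator is known (classical types, and essentially only the node $a=1$), and the exceptional cases are supported purely by experiment. Your proposal founders on exactly this point. The first step --- ``associate to each node $a$ a polynomial operator $\mathcal{L}^{(a)}(D)$ built as a quantum determinant of an $L$-operator acting on an auxiliary fundamental module $V_a$'' --- presupposes a uniform construction that does not exist; the paper states explicitly that $L$-operators have been constructed only for certain $\mathfrak{g}$ and certain $a\in I$, and producing them for the remaining nodes and for $E_6,E_7,E_8,F_4,G_2$ is precisely the open problem. Invoking such an operator as given is therefore not a proof step but a restatement of the conjecture in different language, and your closing admission that the exceptional cases would be handled ``case-by-case'' by verification against (\ref{Qsys}) concedes that no general argument is being offered.

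Even where $L$-operators do exist, the factorization mechanism you describe does not match how (\ref{factor}) actually arises. Before applying the restriction map (\ref{res}) the factors $(1-z_j(u)D)$ do not commute (the paper stresses that the order of the product is crucial), so there is no ``spectral decomposition into commuting factors'' at the $L$-operator level; the commuting product over $\Lambda_a$ only appears after restriction. Moreover the $L$-operators need not be polynomial in $D$ at all (type $B_r$ contains the factor $(1+z_0(u)D)^{-1}$, type $D_r$ contains $(1-z_r(u)z_{\overline r}(u+2)D^2)^{-1}$), so your $\mathcal{L}^{(a)}(D)$ as a finite sum is not what the construction yields. Your proposed description of the $D^{t_a}$-factors --- long-orbit weights of $V_a$ acquiring a $t_a$-fold ``Frobenius-type'' degeneracy --- is also contradicted by the known and conjectured data: in type $C_r$ one has $\Lambda'_1=\{0\}$, coming from the single factor $(1-z_{\overline r}(u)z_r(u+1)D^2)$, and in $F_4$ ($a=4$) and $G_2$ ($a=2$) the conjectural $\Lambda'_a$ equals $\Lambda_1$, a set attached to a \emph{different} fundamental representation rather than a stratum of the weights of $\operatorname{res}W^{(a)}_1$. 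Finally, the passage from a nonminimal annihilating operator to the minimal recurrence of Conjecture \ref{mainconj}, which you flag yourself, is left unresolved, so even granting all the missing constructions the argument would not close. What can honestly be salvaged from your outline is the content of Section \ref{review}: when an $L$-operator with inverse $\sum_m T^{(a)}_m D^m$ is available, restriction immediately gives (\ref{CQ}) and the product form (\ref{factor}); beyond those cases the conjecture remains open.
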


An effective approach to both conjectures can be found in the study of difference $L$-operators and $TT$-relations. See, for example, \cite{frenkel1996quantum}, \cite{MR1890924} and references therein. So far the $L$-operators have been constructed only for certain $\mathfrak{g}$ and $a\in I$ although our work indicates that there are more to be studied. We use them to settle Conjectures \ref{mainconj} and \ref{mainconj2} in the corresponding cases. One can sometimes deduce (\ref{CQ}) from a $TT$-relation by restriction in a straightforward way. We also note that (\ref{factor}) is closely related to the existence of the factorized form of a difference $L$-operator.

In the literature there are known results about (\ref{CQ}) in some special cases. The $Q$-systems of type $A$ are considered from the viewpoint of integrable systems in \cite{dk2010q,dfk}. The $Q$-systems of type $A$ and $D$ are studied in \cite{2009arXiv0905.3776N} and the expressions for the coefficients of linear recurrence relations are explicitly given. However, these works do not indicate the relevance of their results on linear recurrence relations with earlier ones on difference $L$-operators and $TT$-relations.

%the folowing paragraph has been added
We would like to mention some additional works in relation to our conjectures. 
%before:We add the remark that our work forms the background for the periodicity phenomenon studied in \cite{lee2013positivity}.
This work was partly motivated to understand understand certain periodicity phenomenon associated $Q$-systems studied in \cite{lee2013positivity}. A gauge theoretic discussion to linear recurrences among $Q$-systems is given in \cite{MR3279992}. In \cite{2013arXiv1310.6624W}, a uniform construction of conserved quantities of $Q$-systems, which sometimes can be used to derive linear recurrences, is presented. Linear recurrence relations in frieze sequences \cite{MR2729004, MR2824571} have been also studied. Both topics fit into the topic of linear recurrence relations in sequences of cluster transformations.

Conjectures \ref{mainconj} and \ref{mainconj2} pose the following problems :
\begin{itemize}
\item to describe $\Lambda_a$ and $\Lambda_a'$,
\item to determine $\ell_a$,
\item to find the expression for $C_k^{(a)},\,k=0,\cdots, \ell_a$,
%both as elements of  $\mathbb{Z}[y^{\pm}_{a}]_{a\in I}$ and $\mathbb{Z}[q_1,\cdots,q_r]_{a\in I}$
\item and to find the generating function
\begin{equation}
\sum_{m=0}^{\infty}Q_m^{(a)}D^m \label{Qgf}
\end{equation}
which is expected to be rational in $D$.
\end{itemize}
These are, of course, closely related with each other. By comparing the coefficients of the first degree term in (\ref{factor}), we obtain 
\begin{equation}
C_1^{(a)}=\sum_{\lambda\in \Lambda_a}e^{\lambda}\label{CLamb}.
\end{equation}
%We also deduce
%\begin{equation}
%\ell_a=|\Lambda_a|+t_a|\Lambda_a'| \label{ellLamb}
%\end{equation}
%from (\ref{factor}) by comparing the degrees of both sides of (\ref{factor}).
We also deduce
\begin{equation}
\ell_a=|\Lambda_a|+t_a|\Lambda_a'| \label{ellLamb}
\end{equation}
by comparing the degrees of both sides of (\ref{factor}).
From the general theory of linear recurrence relations we also know that the numerator of the rational function (\ref{Qgf}) is given by (\ref{factor}).

Experiments have shown that the difference between $\ell_a$ and $\dim W^{(a)}_1$ is relatively small in many cases and (\ref{ellLamb}) partly clarifies this. For example, if $C_1^{(a)}=Q_{1}^{(a)}+\delta_a$ for some integer $\delta_a$ (which is true in many cases), then $|\Lambda_a|=\dim W^{(a)}_1+\delta_a$ from (\ref{CLamb}). If we further assume that $t_a=1$, then we can conclude from (\ref{ellLamb}) that 
\begin{equation}
\ell_a=|\Lambda_a|=\dim W^{(a)}_1+\delta_a. \label{elldim}
\end{equation}

Since this work is primarily of experimental nature we briefly explain how one can carry out experiments to get evidences for our conjectures. 
%before:Note that a choice of complex numbers for each $Q_1^{(a)},\, a\in I$ gives rise to a homomorphism $$\varphi:\mathbb{Z}[\chi_1,\cdots,\chi_r]\to \mathbb{C}.$$ Here each $\chi_a$ denotes the character of $L(\omega_a)$ for $a\in I$. 
Note that a choice of complex numbers for each $Q_1^{(a)},\, a\in I$ gives rise to a homomorphism 
$$\varphi:\mathbb{Z}[P]^{W}\to \mathbb{C}.$$ We then use (\ref{Qsys}) in order to get $\varphi(Q_m^{(a)})$. As the expression of $Q_m^{(a)}$ gets complicated quite fast as $m$ grows, it is more convenient to work with the sequence $\{\varphi(Q_m^{(1)})\}_{m\geq 0}$ of complex numbers obtained in this way. Then we try to find a conjectural linear recurrence relation in it and for this purpose we have used {\it Mathematica} command \verb+FindLinearRecurrence+, available in {\it Mathematica} version 7.0 or higher.

\begin{example}\label{e6ex}
Let us consider a concrete example in type $E_6$. If we use the following randomly chosen initial condition 
$$
\begin{array}{|c|c|c|c|c|c|}
 q_1 & q_2 & q_3 & q_4 & q_5 & q_6 \\
\hline
 17 & 22 & 38 & 40 & 14 & 31
\end{array},
$$
the corresponding solution of the $Q$-system of type $E_6$ is given by
$$
\begin{array}{c|c|c|c|c|c|c}
m\backslash a& 1 & 2 & 3 & 4 & 5 & 6 \\
\hline
0&  1 & 1 & 1 & 1 & 1 & 1 \\ 
1& 17 & 22 & 38 & 40 & 14 & 31 \\ 
2& 267 & -162 & -25836 & 1068 & 156 & 923 \\ 
3& 4203 & 314748 & 21768228 & 129276 & 1662 & 28315 \\
\vdots & \vdots & \vdots & \vdots & \vdots & \vdots & \vdots
\end{array}
$$
%before:Let $\varphi:\mathbb{Z}[\chi_1,\cdots,\chi_r]\to \mathbb{C}$ be the corresponding homomorphism and consider the sequence $\{\varphi(Q_m^{(1)})\}_{m\geq 0}$, which goes like : 
Let $\varphi:\mathbb{Z}[P]^{W}\to \mathbb{C}$ be the corresponding homomorphism and consider the sequence $\{\varphi(Q_m^{(1)})\}_{m\geq 0}$, which goes like : 
$$
1, 17, 267, 4203, 64983, 1015833, 15856320,\cdots.
$$
An experimental observation is that this integer sequence obeys the following linear recurrence relation
$$
\sum_{k=0}^{27}(-1)^k \varphi(C_k^{(1)})\varphi(Q_{n-k}^{(1)})=0
$$
where the coefficients $\varphi(C_k^{(1)})$ are given in Table \ref{tabE6}. By repeating the same kind of experiments with different choices of initial conditions one can come up with the conjectural expression for $C_k^{(1)}\in \mathbb{Z}[q_1,\cdots, q_6]$ as in Table \ref{tabE6}.
\begin{table}
\begin{tabular}{c|c|c}
 $k$ & $\varphi(C_k^{(1)})$ & $C_k^{(1)}$\\
\hline
 0 & 1 & 1 \\
 1 & 17 & $q_1$ \\
 2 & 8 & $q_2-q_5$ \\
 3 & -230 & $q_3-q_1 q_5-q_6+1$ \\
 4 & 422 & $-q_6 q_1+q_1-q_2 q_5+q_4 q_6$ \\
 $\cdots$ & $\cdots$ & $\cdots$ \\
 23 & -418 & $-q_1 q_4+q_5+q_2 q_6-q_5 q_6$ \\
 24 & -230 & $q_3-q_1 q_5-q_6+1$ \\
 25 & 23 & $q_4-q_1$ \\
 26 & 14 & $q_5$ \\
 27 & 1 & 1 \\
\end{tabular}
\caption{\label{tabE6}Coefficients of linear recurrence relations in Example \ref{e6ex}}
\end{table}

We recognize that 27 is the same as the dimension of the smallest nontrivial irreducible representation $L(\omega_1)$ of the simple Lie algebra $E_6$. How can we understand this coincidence? This is an example where (\ref{elldim}) holds with $\delta_a=0$. We will revisit the $E_6$ case in subsection \ref{e6eg} again, where we give a conjectural description of $\Lambda_1$ in (\ref{factor}) and see this coincidence in a clearer way.
\end{example}

This paper is organized as follows. 
%before:In Section \ref{review} we collect the known difference $L$-operators in classical types and prove Conjectures \ref{mainconj} and \ref{mainconj2} in some cases as an application of them.
In Section \ref{review} we collect known results about difference $L$-operators in classical types and use them to prove Conjectures \ref{mainconj} and \ref{mainconj2} in some cases.
In Section \ref{samples} we give a description of the objects appearing in Conjectures \ref{mainconj} and \ref{mainconj2} when $\mathfrak{g}$ is one of exceptional types, for which there is no difference $L$-operator available. In Appendix we give some tables for $\ell_a$ and also some tables related to the growth of the dimensions of the Kirillov-Reshetikhin modules.

%before:\section{Difference $L$-operators and their application to linear recurrence relations}\label{review}
\section{Difference $L$-operators and application to linear recurrence relations}\label{review}
In this section we give a review of results on difference $L$-operators and prove Conjectures \ref{mainconj} and \ref{mainconj2} for the cases where we have the corresponding difference $L$-operators. The proofs are simple and straightforward. Although some of these results are already known, our proofs may give a new perspective on the subject.

A relatively recent survey on the subject is given in Section 9 of \cite{1751-8121-44-10-103001}, on which our discussion is based. The difference $L$-operators in classical types are studied in \cite{frenkel1996quantum}, \cite{kuniba1995quantum}, \cite{tsuboi1996solutions} and \cite{MR1890924}. See \cite{tsuboi2002difference} also for the $L$-operators associated with twisted quantum affine algebras. 

Let $\mathbb{Z}[Y^{\pm}_{a,z}]_{a\in I, z \in \mathbb{C}^{\times}}$ be the ring of Laurent polynomials in $\{Y_{a,z}|a\in I, z \in \mathbb{C}^{\times}\}$. There exists a surjective ring homomorphism
\begin{equation}
\operatorname{res} : \mathbb{Z}[Y^{\pm}_{a,z}]_{a\in I, z \in \mathbb{C}^{\times}} \to \mathbb{Z}[y^{\pm}_{a}]_{a\in I} \label{res}
\end{equation}
given by $Y_{a,z}\mapsto y_a$. For $f:\mathbb{C}\to  \mathbb{Z}[Y^{\pm}_{a,z}]_{a\in I, z \in \mathbb{C}^{\times}}$, let us define the difference operator $D$ by $Df(u): = f(u+2)$ in type $A_r, B_r$ and $D_r$ and $Df(u): = f(u+1)$ in type $C_r$. 

For each $\mathfrak{g}$ of classical type, we will define a finite set $J$ and $z_j(u)\in \mathbb{Z}[Y^{\pm}_{a,z}]_{a\in I, z \in \mathbb{C}^{\times}}$ for each $j\in J$. The set of weights of $\operatorname{res} W_{1}^{(1)}$ is given by $\{\lambda_j|j\in J\}$ where $\lambda_j\in P\, (j\in J)$ satisfies 
$$
\operatorname{res} z_{j}(u)=e^{\lambda_j}\in \mathbb{Z}[y^{\pm}_{a}]_{a\in I}.
$$
The difference $L$-operator $L(u)$ is given as a certain product of the operators of the form $(1\pm z_{j}(u)D)^{\pm 1}$ acting on the elements of $\{f|f:\mathbb{C}\to \mathbb{Z}[Y^{\pm}_{a,z}]_{a\in I, z \in \mathbb{C}^{\times}}\}$. For example, in type $A_r$, we have
$J=\{1,\cdots, r+1\}$ and the $L$-operator $L(u)$, given by
$$
L(u)= (1-z_{r+1}(u)D)\cdots (1-z_2(u)D)(1-z_1(u)D)
$$
where $z_j(u)$ is given in (\ref{zaa}). The order of product is of crucial importance due to the non-commutativity of operations involved here. We shall use the following notation
\begin{equation}
\prod_{1 \le i \le k}^{\longrightarrow}X_i = X_1X_2\cdots X_k,
\qquad
\prod_{1 \le i \le k}^{\longleftarrow}X_i = X_k\cdots X_2X_1.
\end{equation}

There is a notion of $q$-character $\chi_q(V) \in \mathbb{Z}[Y^{\pm}_{a,z}]_{a\in I, z \in \mathbb{C}^{\times}}$ for a finite-dimensional representation $V$ of $U_{q}(\hat{\mathfrak{g}})$, which is introduced in \cite{frenkel1999q}. We will denote $\chi_q\left (W^{(a)}_m(u)\right)$ by $T^{(a)}_m(u)$ in the following subsections. Although we do not discuss the details of it here, we should keep in mind that
$$
\operatorname{res}T_m^{(a)}(u)=Q^{(a)}_m.
$$

\subsection{Type $A_r$}
Let $J=\{1,2\ldots, r+1\}$. For each $j\in J$, let 
\begin{equation}
z_j(u)=Y^{-1}_{j-1, q^{u+j}}Y_{j, q^{u+j-1}}\label{zaa}
\end{equation}
where $Y_{0, q^u}=Y_{r+1, q^u}=1$.

The difference $L$-operator is given by
\begin{equation}
\begin{aligned}
L(u)&= (1-z_{r+1}(u)D)\cdots (1-z_2(u)D)(1-z_1(u)D)\\
	&= \sum_{a=0}^{r+1}(-1)^aT^{(a)}_1(u+a-1)D^a
\label{La}
\end{aligned}
\end{equation}
where $T^{(0)}_1(u)= T^{(r+1)}_1(u)=1$. And the multiplicative inverse of $L(u)$ is
\begin{equation}
\begin{aligned}\label{linva}
L(u)^{-1} & =(1-z_1(u)D)^{-1}(1-z_2(u)D)^{-1}\cdots (1-z_{r+1}(u)D)^{-1} \\
		  & =\sum_{m \ge 0}  T^{(1)}_m(u+m-1) D^{m}.
\end{aligned}
\end{equation}

By multiplying (\ref{La}) and (\ref{linva}) in two different orders, 
we get the following $TT$-relations : 
\begin{equation}\label{TTa}
\begin{aligned}
&\sum_{0 \le a \le \min(r+1,m)}(-1)^aT^{(a)}_1(u+a)
T^{(1)}_{m-a}(u+m+a)=\delta_{m 0},\\
&\sum_{0 \le a \le \min(r+1,m)}(-1)^aT^{(a)}_1(u+m-a)
T^{(1)}_{m-a}(u-a) = \delta_{m 0}
\end{aligned}
\end{equation}
for $m \in \mathbb{Z}_{\geq 0}$.

In this case the following theorem is hardly new. 
\begin{theorem}\label{linA}
Let $\mathfrak{g}$ of type $A_r$. Conjectures \ref{mainconj} and \ref{mainconj2} hold for $a=1$ with
\begin{itemize}
\item $\Lambda_1=\{\lambda_j|j\in J\}$,
\item $\ell_1=r+1$,
\item $C_k^{(1)}=Q_1^{(k)}=\chi\left(\Lambda^k(\operatorname{res} W^{(1)}_1)\right)$ for $k=0,1,\cdots,r+1$.
\end{itemize}
We have
$$
\sum_{m=0}^{\infty}Q_{m}^{(1)}D^m=\frac{1}{\prod_{\lambda \in \Lambda_1}\left(1-e^{\lambda}D\right)}\label{genQa}.
$$
\end{theorem}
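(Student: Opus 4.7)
The plan is to obtain everything by applying the restriction homomorphism $\operatorname{res}$ to the $L$-operator identities (\ref{La})--(\ref{linva}) and to the $TT$-relations (\ref{TTa}); essentially nothing else is needed beyond what has already been set up in the section.

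First I would apply $\operatorname{res}$ to the first relation in (\ref{TTa}). Using $\operatorname{res} T^{(k)}_m(u) = Q^{(k)}_m$ (independent of $u$), the relation collapses for $m > r+1$ to
\[
\sum_{a=0}^{r+1}(-1)^a Q^{(a)}_1 Q^{(1)}_{m-a} = 0,
\]
which is exactly the recurrence (\ref{CQ}) of order $r+1$ with coefficients $C^{(1)}_k = Q^{(k)}_1 = q_k$ (and $C^{(1)}_0 = C^{(1)}_{r+1} = 1$, since $T^{(0)}_1 = T^{(r+1)}_1 = 1$). The identification $C^{(1)}_k = \chi\bigl(\Lambda^k(\operatorname{res} W^{(1)}_1)\bigr)$ is then the classical fact that in type $A_r$ the restriction $\operatorname{res} W^{(k)}_1$ is the fundamental representation $L(\omega_k) \cong \Lambda^k L(\omega_1)$.

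Next, to prove the factorization in Conjecture \ref{mainconj2}, I would apply $\operatorname{res}$ directly to the factored form of $L(u)$ given in (\ref{La}). The crucial observation is that $\operatorname{res} z_j(u) = e^{\lambda_j} \in \mathbb{Z}[P]$ is independent of the spectral parameter $u$, so the originally noncommuting operator factors $(1 - z_j(u)D)$ descend to honest polynomial factors $(1 - e^{\lambda_j} D)$ in the commutative ring $\mathbb{Z}[P][D]$. Matching the two expressions for $L(u)$ therefore yields
\[
\sum_{k=0}^{r+1}(-1)^k C^{(1)}_k D^k \;=\; \prod_{j=1}^{r+1}\bigl(1 - e^{\lambda_j} D\bigr),
\]
so that $\Lambda_1 = \{\lambda_j \mid j \in J\}$ and $\Lambda'_1 = \emptyset$; since $z_1(u) = Y_{1,q^u}$ (because $Y_{0,\cdot}=1$), we have $\omega_1 = \lambda_1 \in \Lambda_1$. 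The generating function formula follows by the same argument applied to (\ref{linva}): $\operatorname{res}$ of the left-hand side is $\bigl(\operatorname{res} L(u)\bigr)^{-1}$, while $\operatorname{res}$ of the right-hand side gives $\sum_{m\ge 0} Q^{(1)}_m D^m$.

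The only residual bookkeeping points, which I expect to be routine, are (i) justifying that $\operatorname{res}$ can be applied coefficientwise to the formal inverse $L(u)^{-1}$, which is automatic once each $(1 - z_j(u)D)^{-1}$ is expanded as a geometric series in $D$; and (ii) verifying the minimality $\ell_1 = r+1$, which should follow from the distinctness of the weights $\lambda_j \in P$, so that no proper divisor of $\prod_j(1 - e^{\lambda_j} D)$ can annihilate $\{Q^{(1)}_m\}$ over $\mathbb{Z}[P]^W$. The main conceptual content, and what I see as the least purely formal step, is exactly the interplay between the noncommutative $L$-operator algebra and its commutative image under $\operatorname{res}$; once this is isolated, the theorem is an immediate consequence.
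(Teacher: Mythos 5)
Your proposal is correct and follows essentially the same route as the paper: restrict the $L$-operator identities (\ref{La})--(\ref{linva}) to get the factorized polynomial $\prod_{j\in J}(1-e^{\lambda_j}D)$ and the generating function, and observe that the resulting recurrence is just the image of the $TT$-relations (\ref{TTa}) under $\operatorname{res}$. The only cosmetic difference is that you restrict the factored form of $L(u)$ (using that $\operatorname{res}z_j(u)=e^{\lambda_j}$ is $u$-independent, so the factors commute), whereas the paper verifies the same product formula via the exterior-power character identity and restricts the expanded form; these amount to the same computation.
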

\begin{proof}
We first check that
$$\begin{aligned}
\prod_{\lambda \in \Lambda_1}(1-e^{\lambda}D)&=\prod_{j \in J}(1-e^{\lambda_j}D) \\
&=\sum_{k=0}^{r+1}(-1)^k C_{k}^{(1)}D^k.
\end{aligned}
$$

The image of the $L$-operator (\ref{La}) under the restriction map is
$$
\operatorname{res} L(u)=\sum_{k=0}^{r+1}(-1)^k C_{k}^{(1)}D^k.
$$
From (\ref{linva}) we have
$$
%before:\operatorname{res} L(u)^{-1}=\sum_{m=0}^{\infty}Q_{m}^{(1)}D^m=\frac{1}{\prod_{\lambda \in \Lambda}\left(1-e^{\lambda}D\right)}.
\operatorname{res} L(u)^{-1}=\sum_{m=0}^{\infty}Q_{m}^{(1)}D^m=\frac{1}{\prod_{\lambda \in \Lambda_1}\left(1-e^{\lambda}D\right)}.
$$
Finally we see that
$$
\left(\sum_{k=0}^{r+1}(-1)^k C_{k}^{(1)}D^k\right)\left(\sum_{m=0}^{\infty}Q_{m}^{(1)}D^m\right)=1.
$$
It implies (\ref{CQ}), which is nothing but the image of (\ref{TTa}) under the restriction map.
\end{proof}

\begin{remark} 
This has been long known as one can see from (\ref{TTa}). See also (4.3) in \cite{dk2010q}, Theorem 2.8 in \cite{dfk} and Theorem 2.5 in \cite{2009arXiv0905.3776N}.
\end{remark}
%before: We expect that similar results hold for general $a\in I$.
Our experiments show that $C_1^{(a)}=Q_{1}^{(a)}$ and $\ell_a=\dim W^{(a)}_1$ for any $a\in I$. This leads us to the following : 
\begin{conjecture}\label{Aconj}
Let $\mathfrak{g}$ of type $A_r$. Conjectures \ref{mainconj} and \ref{mainconj2} hold for any $a\in I$ with
%before: Let $\mathfrak{g}$ of type $A_r$. Conjecture \ref{mainconj} and \ref{mainconj2} hold with
\begin{itemize}
\item $\Lambda_a=\text{the set of weights of }\operatorname{res} W^{(a)}_1$,
%before:\item $\ell_a=\dim \operatorname{res} W^{(a)}_1$,
\item $\ell_a=\dim W^{(a)}_1$,
\item $C_k^{(a)}=\chi\left(\Lambda^k(\operatorname{res} W^{(a)}_1)\right)$ for $k=0,\cdots,\ell_a$.
\end{itemize}
\end{conjecture}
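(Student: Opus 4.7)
The plan is to prove Conjecture \ref{Aconj} directly via the Weyl character formula, bypassing any explicit construction of a higher fundamental $L$-operator. In type $A_r$ the $K$-R module restricts to an irreducible rectangular representation, $\operatorname{res} W_m^{(a)}\cong L(m\omega_a)$, so $Q_m^{(a)}=s_{(m^a)}(x_1,\ldots,x_n)$ with $n=r+1$ and $x_i=e^{\lambda_i}$, where $\{\lambda_1,\ldots,\lambda_n\}$ are the weights of the standard representation. Since $\operatorname{res} W_1^{(a)}=\Lambda^a L(\omega_1)$ is minuscule in type $A$, its weights $\lambda_{j_1}+\cdots+\lambda_{j_a}$ for $1\le j_1<\cdots<j_a\le n$ are multiplicity-free, so $|\Lambda_a|=\binom{n}{a}=\dim W_1^{(a)}$.

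First I would apply the Weyl character formula for the rectangular partition $\mu=(m^a,0^{n-a})$, expand the numerator determinant as a sum over permutations, and sum the resulting geometric series in $D$ termwise. This produces
\begin{equation}
\sum_{m\ge 0}Q_m^{(a)}D^m=\frac{1}{V(x)}\sum_{\sigma\in S_n}\operatorname{sgn}(\sigma)\,\frac{\prod_{j=1}^n x_{\sigma(j)}^{n-j}}{1-x_{\sigma(1)}\cdots x_{\sigma(a)}D},
\end{equation}
with $V(x)=\prod_{i<j}(x_i-x_j)$. The product $x_{\sigma(1)}\cdots x_{\sigma(a)}$ depends only on the $a$-set $J=\{\sigma(1),\ldots,\sigma(a)\}$, so every pole in $D$ lies in $\{x_J^{-1}:|J|=a\}$, and the denominator divides $\prod_{|J|=a}(1-x_JD)=\prod_{\lambda\in\Lambda_a}(1-e^\lambda D)$. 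This already yields a recurrence of length at most $\binom{n}{a}$, and expanding the product shows that $C_k^{(a)}$ equals the $k$th elementary symmetric polynomial in $\{e^\lambda\}_{\lambda\in\Lambda_a}$. By the multiplicity-one property these are exactly $\chi\bigl(\Lambda^k\operatorname{res} W_1^{(a)}\bigr)$, giving both the formula for $C_k^{(a)}$ in Conjecture \ref{Aconj} and the factorization of Conjecture \ref{mainconj2} with $\Lambda'_a=\emptyset$ (consistent with $t_a=1$).

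The main remaining obstacle is minimality: showing that $\ell_a$ equals $\binom{n}{a}$, rather than some proper divisor. For this I would compute the residue at each pole $D=x_J^{-1}$. Only permutations $\sigma$ with $\{\sigma(1),\ldots,\sigma(a)\}=J$ contribute, and grouping them according to orderings within $J$ and $J^c$ factors their contribution as a product of two auxiliary Vandermondes, one on the indices in $J$ and one on those in $J^c$, hence nonvanishing in the fraction field of $\mathbb{Z}[y_a^{\pm}]_{a\in I}$. Thus every prescribed factor $(1-x_JD)$ is essential. Once this is in place, clearing denominators produces (\ref{CQ}), and the $W$-invariance of $Q_m^{(a)}$ places the coefficients in $\mathbb{Z}[q_1,\ldots,q_r]$ as required. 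It would still be valuable to lift this computation to a noncommutative identity for a fundamental $L$-operator $L^{(a)}(u)$ built by fusion, so that (\ref{CQ}) appears as the restriction of a $TT$-relation paralleling the $a=1$ case of Theorem \ref{linA}.
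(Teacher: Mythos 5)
Your argument addresses a statement that the paper itself does not prove: Conjecture \ref{Aconj} is advanced on experimental grounds, by analogy with the $a=1$ case, which is the only case established (Theorem \ref{linA}) and there by a different mechanism, namely the factorized difference $L$-operator (\ref{La}), its inverse (\ref{linva}), and the restriction of the resulting $TT$-relations (\ref{TTa}) under (\ref{res}). Your route is classical and, as far as I can see, correct: from $\operatorname{res} W_m^{(a)}\cong L(m\omega_a)$ the character $Q_m^{(a)}$ is the rectangular Schur polynomial in the weights $x_1,\dots,x_{r+1}$ of the standard representation, the bialternant form of the Weyl character formula plus termwise geometric summation gives the rational generating function with denominator dividing $\prod_{|J|=a}(1-x_JD)$, and the residue at each $D=x_J^{-1}$ indeed factors (up to a monomial and a sign) into the Vandermonde determinants on $J$ and on $J^c$, which are nonzero in the integral domain $\mathbb{Z}[y_a^{\pm}]_{a\in I}$; this is precisely what yields minimality $\ell_a=\binom{r+1}{a}=\dim W_1^{(a)}$, a point the $L$-operator argument in the paper does not address even for $a=1$. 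Two details you should make explicit in a write-up: distinct $a$-subsets $J$ give distinct monomials $x_J$ even after imposing $x_1\cdots x_{r+1}=1$ (this is the minusculeness you invoke; it guarantees the poles do not collide and that $e_k(\{x_J\})=\chi(\Lambda^k\operatorname{res}W_1^{(a)})$), and the identification $Q_m^{(a)}=\chi(L(m\omega_a))$ in type $A$ deserves a citation rather than an assertion. Comparing the two approaches: the paper's $L$-operator/$TT$-relation framework is uniform across classical types, explains the factorization (\ref{factor}) structurally, and points toward the exceptional cases, but for type $A$ it covers only $a=1$; your computation settles the full type-$A$ conjecture, including the exact order $\ell_a$ and the coefficients $C_k^{(a)}=\chi\bigl(\Lambda^k\operatorname{res}W_1^{(a)}\bigr)$, though it leans on the irreducibility and multiplicity-freeness of the rectangular restrictions and so does not by itself extend to other types; your closing suggestion to recast the identity as the restriction of a fused $TT$-relation for an $L$-operator $L^{(a)}(u)$ is exactly the bridge back to the paper's framework and would be the more valuable statement to pursue.
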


\subsection{Type $B_r$}
%before:Let $J=\{1,2\ldots, r, 0, \overline{r},\ldots, \overline{2},\overline{1}\}$. For each $j\in J$, let
Let $J=\{1,2\ldots, r, 0, \overline{r},\ldots, \overline{2},\overline{1}\}$. For $j \in J$, we define $z_j(u)$ as
$$
\begin{aligned}
& z_a(u)= Y_{a, q^{2u+2a-2}}Y_{a-1, q^{2u+2a}}^{-1}
\qquad (1\le a \le r-1),\\
& z_r(u)= Y_{r, q^{2u+2r-3}}Y_{r, q^{2u+2r-1}}
Y_{r-1, q^{2u+2r}}^{-1},\\
& z_0(u) = Y_{r, q^{2u+2r-1}} Y_{r, q^{2u+2r-3}} 
Y^{-1}_{r, q^{2u+2r+1}} Y^{-1}_{r, q^{2u+2r-1}},\\
& z_{\overline{r}}(u) = Y_{r-1,q^{2u+2r-2}}Y_{r, q^{2u+2r-1}}^{-1}
Y_{r, q^{2u+2r+1}}^{-1},\\
& z_{\overline{a}}(u)= Y_{a-1, q^{2u+4r-2a-2}}
Y_{a, q^{2u+4r-2a}}^{-1}
\qquad (1\le a \le r-1)
\end{aligned}
$$
where $Y_{0, q^u}=1$.

The difference $L$-operator is given by
\begin{equation}
L(u) = \prod_{1 \le a \le r}^{\longrightarrow}
(1-z_{\overline{a}}(u)D)\cdot
(1+z_0(u)D)^{-1}\cdot
\prod_{1 \le a \le r}^{\longleftarrow}(1-z_a(u)D)\label{Lb}
\end{equation}
Note that this is not a polynomial in $D$ as opposed to (\ref{La}). The multiplicative inverse of $L(u)$ is
\begin{equation}
L(u)^{-1} = \sum_{m \ge 0}  T^{(1)}_m(u+m-1) D^{m}. \label{Linvb}
\end{equation}

\begin{theorem}
Let $\mathfrak{g}$ of type $B_r$. Conjectures \ref{mainconj} and \ref{mainconj2} hold for $a=1$ with
\begin{itemize}
\item $\Lambda_1=\{\lambda_j|j\in J\backslash\{0\}\}$,
\item $\ell_1=2r$,
\item $C_k^{(1)}=\sum_{n=0}^{k}(-1)^{k-n} \chi(\Lambda^n \operatorname{res} W_1^{(1)})$ for $k=0,\cdots,2r$.
\end{itemize}
We have
\begin{equation}
\sum_{m=0}^{\infty}Q_{m}^{(1)}D^m=\frac{1+D}{\prod_{\lambda\in \Lambda_1}\left(1-e^{\lambda}D\right)}\label{genQb}.
\end{equation}
\end{theorem}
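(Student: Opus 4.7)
The plan is to mirror the proof of Theorem~\ref{linA}, with modifications forced by the rational (rather than polynomial) nature of $L(u)$ in type $B_r$.

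First I would compute $\operatorname{res} L(u)$. A direct inspection of $z_0(u)$ gives $\operatorname{res} z_0(u)=1$, since the four $Y_{r,\ast}$ factors cancel in pairs under $Y_{r,z}\mapsto y_r$. For $j\in J\setminus\{0\}$ we have $\operatorname{res} z_j(u)=e^{\lambda_j}$ by construction. Since all these images are $u$-independent, they commute with $D$ after restriction, so the non-commutative operator algebra collapses to the commutative ring $\mathbb{Z}[P]((D))$ and
$$
\operatorname{res} L(u)=(1+D)^{-1}\prod_{\lambda\in\Lambda_1}(1-e^{\lambda}D).
$$
Inverting this and comparing with the restriction of (\ref{Linvb}) yields the generating-function formula (\ref{genQb}).

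Next, defining $C_k^{(1)}$ by the expansion $\prod_{\lambda\in\Lambda_1}(1-e^\lambda D)=\sum_k(-1)^kC_k^{(1)}D^k$ and multiplying (\ref{genQb}) by this product gives
$$
\left(\sum_{k=0}^{2r}(-1)^kC_k^{(1)}D^k\right)\!\left(\sum_{m\ge 0}Q_m^{(1)}D^m\right)=1+D;
$$
reading off the coefficient of $D^n$ for $n\ge 2$ produces the recurrence (\ref{CQ}). Weyl-group symmetry makes $\Lambda_1$ closed under negation, so $\prod_{\lambda\in\Lambda_1}e^{\lambda}=1$ and $C_{2r}^{(1)}=1$; minimality of $\ell_1=2r$ follows because $1+D$ shares no common factor with the denominator for generic values of the $y_a$.

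Finally, to obtain the stated alternating-sum formula for $C_k^{(1)}$, I would split the zero-weight line off of $V:=\operatorname{res} W_1^{(1)}$, so that $\chi(V)=1+\sum_{\lambda\in\Lambda_1}e^\lambda$. The relation $\chi(\Lambda^k V)=\chi(\Lambda^k V')+\chi(\Lambda^{k-1}V')$, where $V'$ denotes the complementary virtual character, iterates to give $\chi(\Lambda^k V')=\sum_{n=0}^k(-1)^{k-n}\chi(\Lambda^n V)$. Since $C_k^{(1)}$ is by definition the degree-$k$ elementary symmetric polynomial in $\{e^\lambda:\lambda\in\Lambda_1\}$, i.e.\ exactly $\chi(\Lambda^k V')$, the formula follows. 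I do not foresee a real obstacle: the only delicate points are the explicit check that $\operatorname{res} z_0(u)=1$ and justifying the collapse to a commutative formal-series ring under $\operatorname{res}$. The integrality condition $C_k^{(1)}\in\mathbb{Z}[q_1,\dots,q_r]$ demanded by Conjecture~\ref{mainconj} is automatic once $C_k^{(1)}$ is recognised as a sum of characters of exterior powers of a $\mathfrak{g}$-module.
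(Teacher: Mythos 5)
Your proposal is correct and takes essentially the same route as the paper: restrict the factorized $L$-operator (using $\operatorname{res} z_0(u)=1$) and its inverse to obtain (\ref{genQb}), multiply the two restricted series to get the recurrence (\ref{CQ}), and identify $C_k^{(1)}$ with alternating sums of exterior-power characters of $\operatorname{res} W_1^{(1)}$. Your extra remarks on the minimality of $\ell_1=2r$ and on $C_{2r}^{(1)}=1$ are fine additions that the paper leaves implicit, and your derivation of the alternating-sum formula by splitting off the zero-weight line is equivalent to the paper's division of $\sum_{n=0}^{2r+1}(-1)^n\chi(\Lambda^n\operatorname{res} W_1^{(1)})D^n$ by $(1-D)$.
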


\begin{proof}
Note that
$$\begin{aligned}
\prod_{\lambda \in \Lambda_1}\left(1-e^{\lambda}D\right) & = \frac{\prod_{j\in J}\left(1-e^{\lambda_j}D\right)}{(1-D)}\\
&=\frac{\sum_{n=0}^{2r+1}(-1)^n \chi(\Lambda^n \operatorname{res} W_1^{(1)}) D^n }{(1-D)} \\
&=\sum_{k=0}^{2r}(-1)^k C_{k}^{(1)}D^k
\end{aligned}
$$
where we have used the relation
\begin{equation*}
\sum_{n=0}^{2r+1}(-1)^{n} \chi(\Lambda^n \operatorname{res} W_1^{(1)})=0
\end{equation*}
in the last line. 

The image of the $L$-operator (\ref{Lb}) under the restriction map is
$$\begin{aligned}
\operatorname{res} L(u)&=\frac{\prod_{j\in J\backslash\{0\}}\left(1-e^{\lambda_j}D\right)}{(1+D)}\\
&=\frac{1}{(1+D)}\sum_{k=0}^{2r}(-1)^k C_{k}^{(1)}D^k.
\end{aligned}
$$
From (\ref{Linvb}) we obtain
$$\operatorname{res} L(u)^{-1}=\sum_{m=0}^{\infty}Q_{m}^{(1)}D^m=\frac{1+D}{\prod_{\lambda\in \Lambda_1}\left(1-e^{\lambda}D\right)}
$$
Finally we get
$$
\frac{1}{(1+D)}\left(\sum_{k=0}^{2r}(-1)^k C_{k}^{(1)}D^k\right)\left(\sum_{m=0}^{\infty}Q_{m}^{(1)}D^m\right)=1
$$
which implies (\ref{CQ}). 
\end{proof}
% \begin{corollary}
% $$
% \operatorname{res}W_{m}^{(1)}\oplus \operatorname{Sym}^{m-2} \operatorname{res} W_{1}^{(1)}=\operatorname{Sym}^m \operatorname{res}W_{1}^{(1)}
% $$
% \end{corollary}
% \begin{proof}
% This is a consequence of \ref{genQb}.
% \end{proof}
\begin{remark}
We can easily check that
\begin{itemize}
\item $C_0^{(1)}= 1$,
\item $C_k^{(1)}=q_k-q_{k-1}$ for $k=1,\cdots,r-1$,
\item $C_r^{(1)}=q_r^2-2q_{r-1}$,
\item $C_k^{(1)}=C_{2r-k}^{(1)}$ for $k=r+1,\cdots,2r$.
\end{itemize}
%See Lemma 7.2 in \cite{MR1745263} also.
\end{remark}

\subsection{Type $C_r$}
Let $J=\{1,2\ldots, r, \overline{r},\ldots, \overline{2},\overline{1}\}$. For $1 \le a \le r$, we put
$$
\begin{aligned}
& z_a(u)= Y_{a, q^{2u+a-1}}Y_{a-1, q^{2u+a}}^{-1},\\
& z_{\overline{a}}(u)= Y_{a-1, q^{2u+2r-a+2}}
Y_{a, q^{2u+2r-a+3}}^{-1}
\end{aligned}
$$
where $Y_{0, q^u}=1$.

The difference $L$-operator is given by
\begin{equation}
\begin{aligned}
L(u)& = \prod_{1 \le a \le r}^{\longrightarrow}
(1-z_{\overline{a}}(u)D)\cdot
(1-z_{\overline{r}}(u)z_r(u+1)D^2)\cdot
\prod_{1 \le a \le r}^{\longleftarrow}(1-z_a(u)D) \\
& =\sum_{a=0}^r(-1)^a T^{(a)}_1(u+\frac{a-1}{2})D^a-\sum_{a=r+2}^{2r+2}(-1)^a T^{(2r+2-a)}_1(u+\frac{a-1}{2})D^a\label{Lc}
\end{aligned}
\end{equation}
where $T^{(0)}_1(u)=1$. See Theorem 2.5 in \cite{MR1890924}. The multiplicative inverse of $L(u)$ is
\begin{equation}
\begin{aligned}\label{linvc}
L(u)^{-1} & =(1-z_1(u)D)^{-1}(1-z_2(u)D)^{-1}\cdots (1-z_{r+1}(u)D)^{-1} \\
		  & =\sum_{m \ge 0}  T^{(1)}_m(u+\frac{m-1}{2}) D^{m}.
\end{aligned}
\end{equation}\label{TTc}
Again we can derive two $TT$-relations similar to (\ref{TTa}) :
\begin{equation}
\begin{aligned}
\sum_{0 \le a \le \min(2r+2,m)}(-1)^a T^{(a)}_1(u+\frac{m-a}{2}) T^{(1)}_{m-a}(u-\frac{a}{2}) &= \delta_{m 0},\\
\sum_{0 \le a \le \min(2r+2,m)}(-1)^a T^{(a)}_1(u+\frac{a}{2}) T^{(1)}_{m-a}(u+\frac{m+a}{2})&= \delta_{m 0}
\end{aligned}
\end{equation}
for $m \in \mathbb{Z}_{\ge 0}$. Here we put $T_1^{(r+1)}(u)=0$ and $T_1^{(a)}(u):=-T_1^{(2r+2-a)}(u)$ for $a=r+2,\cdots, 2r+2$.

\begin{theorem}
Let $\mathfrak{g}$ of type $C_r$. Conjectures \ref{mainconj} and \ref{mainconj2} hold for $a=1$ with
\begin{itemize}
\item $\Lambda_1=\{\lambda_j|j\in J\}$ and $\Lambda'_1=\{0\}$,
\item $\ell_1=2r+2$,
% \item $C_k^{(1)}=(-1)^k\left(\chi(\Lambda^k \operatorname{res} W_1^{(1)})-\chi(\Lambda^{k-2} \operatorname{res} W_1^{(1)})\right)$ for $k=0,\cdots,2r+2$.
\item $C_k^{(1)}=\chi(\Lambda^k \operatorname{res} W_1^{(1)})-\chi(\Lambda^{k-2} \operatorname{res} W_1^{(1)})$ for $k=0,\cdots,2r+2$.
\end{itemize}
We have
\begin{equation}
\sum_{m=0}^{\infty}Q_{m}^{(1)}D^m=\frac{1}{(1-D^2)\prod_{\lambda\in \Lambda_1}(1-e^{\lambda}D)}\label{genQc}.
\end{equation}
\end{theorem}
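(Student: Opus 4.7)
The plan is to follow the same template as Theorem \ref{linA} and the type $B_r$ theorem: restrict the factored form of $L(u)$ in (\ref{Lc}) to obtain an explicit polynomial in $D$, identify it with both sides of (\ref{factor}), combine with the series expansion (\ref{linvc}) to read off the generating function, and then extract (\ref{CQ}) from the identity $L(u) L(u)^{-1} = 1$. The key conceptual observation that makes everything work is that after applying $\operatorname{res}$, all the operators $e^{\mu} D$ mutually commute (because $e^{\mu}$ is independent of $u$), so the non-commutative product in (\ref{Lc}) becomes an honest commutative polynomial in $D$ over $\mathbb{Z}[P]^W$.

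First I would compute the image of each factor under $\operatorname{res}$. By the very definition of $\lambda_j$, each outer factor satisfies $\operatorname{res}(1-z_j(u) D) = 1 - e^{\lambda_j} D$ for $j \in J$, so the two outer products together contribute $\prod_{j \in J}(1 - e^{\lambda_j} D) = \prod_{\lambda \in \Lambda_1}(1 - e^{\lambda} D)$. For the middle factor I compute directly from the definitions of $z_r(u)$ and $z_{\overline{r}}(u)$ that $\operatorname{res}\bigl(z_{\overline{r}}(u) z_r(u+1)\bigr) = y_{r-1} y_r^{-1} \cdot y_r y_{r-1}^{-1} = 1$, so that $\operatorname{res}\bigl(1 - z_{\overline{r}}(u) z_r(u+1) D^2\bigr) = 1 - D^2$. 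Invoking commutativity after restriction, I conclude
$$
\operatorname{res} L(u) = (1-D^2) \prod_{\lambda \in \Lambda_1}(1 - e^{\lambda} D),
$$
which is exactly (\ref{factor}) with $\Lambda_1 = \{\lambda_j \mid j \in J\}$, $\Lambda_1' = \{0\}$ and $t_1 = 2$ (recall that in the paper's enumeration the short simple root $\alpha_1$ of type $C_r$ has $t_1 = 2$). The degree count gives $\ell_1 = |\Lambda_1| + t_1 |\Lambda_1'| = 2r+2$, matching (\ref{ellLamb}).

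For the explicit formula for $C_k^{(1)}$, I would expand the factorization using $\prod_{j\in J}(1-e^{\lambda_j}D) = \sum_{n=0}^{2r}(-1)^n \chi(\Lambda^n \operatorname{res} W_1^{(1)}) D^n$, which holds because $\operatorname{res} W_1^{(1)}$ is the standard $2r$-dimensional representation with weights precisely $\{\lambda_j \mid j \in J\}$. Multiplying by $(1-D^2)$ and reading off coefficients gives $C_k^{(1)} = \chi(\Lambda^k \operatorname{res} W_1^{(1)}) - \chi(\Lambda^{k-2} \operatorname{res} W_1^{(1)})$. The generating function (\ref{genQc}) follows by restricting (\ref{linvc}) and inverting, and the recurrence (\ref{CQ}) comes from matching coefficients in $\operatorname{res} L(u) \cdot \operatorname{res} L(u)^{-1} = 1$, which at the unrestricted level is the first of the two $TT$-relations above. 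The main point requiring care is the verification that the expansion (\ref{Lc}) and the factorization agree on the middle coefficient (the $D^{r+1}$ term must vanish); this boils down to the symplectic duality $\Lambda^{r+1} V \cong \Lambda^{r-1} V$ for $V = \operatorname{res} W_1^{(1)}$, together with the classical decomposition $\chi(\Lambda^k V) = \sum_{j \geq 0} Q_1^{(k-2j)}$ used to match $C_k^{(1)}$ with the coefficients of $D^k$ in the second expression of (\ref{Lc}). I do not expect this to be a real obstacle; the whole argument is a direct adaptation of the $B_r$ proof with $(1+D)$ replaced by $(1-D^2)$.
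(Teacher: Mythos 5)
Your proposal is correct and follows essentially the same route as the paper's proof: restrict the factored $L$-operator (\ref{Lc}) to get $(1-D^2)\prod_{\lambda\in\Lambda_1}(1-e^{\lambda}D)=\sum_{k}(-1)^kC_k^{(1)}D^k$, restrict (\ref{linvc}) to get the generating function, and multiply the two to obtain (\ref{CQ}) as the restriction of the $TT$-relation. Your extra remarks (commutativity of $e^{\mu}D$ after restriction, $\operatorname{res}\bigl(z_{\overline{r}}(u)z_r(u+1)\bigr)=1$, and the identification $Q_1^{(k)}=\chi(\Lambda^k\operatorname{res}W_1^{(1)})-\chi(\Lambda^{k-2}\operatorname{res}W_1^{(1)})$) just make explicit steps the paper leaves implicit.
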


\begin{proof}
We check that
$$\begin{aligned}
\prod_{\lambda \in \Lambda_1}\left(1-e^{\lambda}D\right)\prod_{\lambda \in \Lambda_1'}\left(1-e^{\lambda}D^2\right)&=(1-D^2)\prod_{j\in J}\left(1-e^{\lambda_j}D\right)\\
&=(1-D^2)\sum_{k=0}^{2r}(-1)^k \chi(\Lambda^k \operatorname{res} W_1^{(1)})D^k \\
&=\sum_{k=0}^{2r+2}(-1)^k C_{k}^{(1)}D^k.
\end{aligned}
$$

The image of the $L$-operator (\ref{Lc}) under the restriction map is
$$
\operatorname{res} L(u)=\sum_{k=0}^{2r+2}(-1)^k C_{k}^{(1)}D^k.
$$
From (\ref{linvc}) we have
$$\operatorname{res} L(u)^{-1}=\sum_{m=0}^{\infty}Q_{m}^{(1)}D^m=\frac{1}{(1-D^2)\prod_{\lambda \in \Lambda_1}\left(1-e^{\lambda}D\right)}$$

Finally we obtain
$$
\left(\sum_{k=0}^{2r+2}(-1)^k C_{k}^{(1)}D^k\right)\left(\sum_{m=0}^{\infty}Q_{m}^{(1)}D^m\right)=1.
$$
This implies (\ref{CQ}), which is again the image of (\ref{TTc}) under the restriction map.
\end{proof}
% \begin{corollary}
% $$
% \operatorname{res} W_{m}^{(1)}= \operatorname{Sym}^m \operatorname{res} W_{1}^{(1)}\oplus \operatorname{Sym}^{m-2} \operatorname{res}W_{1}^{(1)} \oplus \operatorname{Sym}^{m-4} \operatorname{res}W_{1}^{(1)}\oplus \cdots
% $$
% \end{corollary}
% \begin{proof}
% This is a consequence of \ref{genQc}.
% \end{proof}
\begin{remark}
From (\ref{Lc}) we know
\begin{itemize}
\item $C_0^{(1)}= 1$,
\item $C_k^{(1)}=q_k$ for $k=1,\cdots,r$,
\item $C_{r+1}^{(1)}=0$,
\item $C_k^{(1)}=-C_{2r+2-k}^{(1)}$ for $k=r+2,\cdots, 2r+2$.
\end{itemize}
%Theorem 17.5 in Fulton and Harris.
\end{remark}

\subsection{Type $D_r$}
Let $J=\{1,2\ldots, r, \overline{r},\ldots, \overline{2},\overline{1}\}$.
For $j \in J$, we define $z_j(u)$ as
$$
\begin{aligned}
& z_a(u)= Y_{a, q^{u+a-1}}Y_{a-1, q^{u+a}}^{-1}
\qquad (1 \le a \le r-2),\\
&z_{r-1}(u) 
= Y_{r-1, q^{u+r-2}}Y_{r, q^{u+r-2}}Y_{r-2,q^{u+r-1}}^{-1},\\
&z_r(u) = Y_{r, q^{u+r-2}}Y_{r-1,q^{u+r}}^{-1},\\
&z_{\overline{r}}(u) = Y_{r-1,q^{u+r-2}}Y_{r,q^{u+r}}^{-1},\\
&z_{\overline{r-1}}(u) = Y_{r-2,q^{u+r-1}}
Y_{r-1,q^{u+r}}^{-1}Y_{r,q^{u+r}}^{-1},\\
& z_{\overline{a}}(u)= Y_{a-1, q^{u+2r-a-2}}
Y_{a, q^{u+2r-a-1}}^{-1}
\qquad (1 \le a \le r-2)
\end{aligned}
$$
where $Y_{0, q^u}=1$.

The difference $L$-operator is given by
\begin{equation}
L(u) = \prod_{1 \le a \le r}^{\longrightarrow}
(1-z_{\overline{a}}(u)D)\cdot
(1-z_r(u)z_{\overline{r}}(u+2)D^2)^{-1}\cdot
\prod_{1 \le a \le r}^{\longleftarrow}(1-z_a(u)D).\label{Ld}
\end{equation}
Again this is not a polynomial in $D$. The multiplicative inverse of $L(u)$ is
\begin{equation}
L(u)^{-1} = \sum_{m \ge 0}  T^{(1)}_m(u+m-1) D^{m}. \label{linvd}
\end{equation}

\begin{theorem}
Let $\mathfrak{g}$ of type $D_r$. Conjectures \ref{mainconj} and \ref{mainconj2} hold for $a=1$ with
\begin{itemize}
\item $\Lambda_1=\{\lambda_j|j\in J\}$,
\item $\ell_1=2r$,
\item $C_k^{(1)}=\chi(\Lambda^k \operatorname{res} W_1^{(1)})$ for $k=0,\cdots,2r$.
\end{itemize}
We have
\begin{equation}
\sum_{m=0}^{\infty}Q_{m}^{(1)}D^m=\frac{1-D^2}{\prod_{\lambda \in \Lambda_1}\left(1-e^{\lambda}D\right)}\label{genQd}.
\end{equation}
\end{theorem}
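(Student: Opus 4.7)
The plan is to mirror the proofs for types $A_r$, $B_r$, and $C_r$, combining the factored $L$-operator (\ref{Ld}) with its inverse expansion (\ref{linvd}). The key preliminary computation is that the problematic quadratic factor in (\ref{Ld}) trivializes under restriction: from the explicit formulas one has $\operatorname{res} z_r(u) = y_r\, y_{r-1}^{-1}$ and $\operatorname{res} z_{\overline{r}}(u) = y_{r-1}\, y_r^{-1}$, so
$$\operatorname{res}\bigl(z_r(u)\, z_{\overline{r}}(u+2)\bigr) \;=\; 1,$$
and therefore $\operatorname{res}\bigl(1 - z_r(u)\, z_{\overline{r}}(u+2)\, D^2\bigr)^{-1} = (1-D^2)^{-1}$.

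With this in hand, I would observe that under $\operatorname{res}$ each $z_j(u)$ becomes a $u$-independent monomial $e^{\lambda_j}$ and hence commutes with the shift operator $D$, so that the ordered products in (\ref{Ld}) collapse to give
$$\operatorname{res} L(u) \;=\; \frac{1}{1-D^2}\prod_{j\in J}\bigl(1-e^{\lambda_j}D\bigr).$$
Because the $2r$ weights $\{\lambda_j\}_{j\in J}$ of the vector representation $\operatorname{res} W_1^{(1)}$ are pairwise distinct, the product on the right equals $\sum_{k=0}^{2r}(-1)^k \chi\bigl(\Lambda^k \operatorname{res} W_1^{(1)}\bigr)\, D^k = \sum_{k=0}^{2r}(-1)^k C_k^{(1)} D^k$. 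Together with $\lambda_1 = \omega_1$, this verifies (\ref{factor}) with the claimed $\Lambda_1 = \{\lambda_j : j \in J\}$ and $\Lambda_1' = \emptyset$.

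Finally I would apply $\operatorname{res}$ to (\ref{linvd}) to get $\operatorname{res} L(u)^{-1} = \sum_{m\geq 0} Q_m^{(1)} D^m$, so that multiplying with the previous identity gives
$$\Bigl(\sum_{k=0}^{2r}(-1)^k C_k^{(1)} D^k\Bigr)\Bigl(\sum_{m\geq 0} Q_m^{(1)} D^m\Bigr) \;=\; 1 - D^2.$$
This is equivalent to the generating-function identity (\ref{genQd}), and comparing coefficients of $D^n$ for $n \geq 3$ delivers the linear recurrence (\ref{CQ}). The normalization $C_{\ell_1}^{(1)} = \pm 1$ follows from the observation that $\Lambda^{2r}\operatorname{res} W_1^{(1)}$ is the trivial representation of $\mathfrak{so}(2r)$, so $C_{2r}^{(1)} = 1$.

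I do not foresee any genuine obstacle here: beyond the one-line cancellation of the $D^2$ factor in the first paragraph, the argument is strictly parallel to the $B_r$ and $C_r$ cases, with the numerator $1 - D^2$ in (\ref{genQd}) arising directly from the inverted quadratic factor in the $L$-operator rather than from a subsequent rewriting of the exterior character polynomial.
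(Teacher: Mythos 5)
Your proposal is correct and follows essentially the same route as the paper's proof: restrict the factorized $L$-operator (\ref{Ld}) so that the inverted quadratic factor becomes $(1-D^2)^{-1}$ and the remaining factors collapse to $\prod_{j\in J}(1-e^{\lambda_j}D)=\sum_k(-1)^kC_k^{(1)}D^k$, then pair this with $\operatorname{res}L(u)^{-1}=\sum_m Q_m^{(1)}D^m$ from (\ref{linvd}) to obtain (\ref{genQd}) and the recurrence (\ref{CQ}). The only difference is that you spell out details the paper leaves implicit (the cancellation $\operatorname{res}(z_r(u)z_{\overline r}(u+2))=1$, the commutation of restricted monomials with $D$, and $C_{2r}^{(1)}=1$), which is fine.
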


\begin{proof}
Note that
$$
\begin{aligned}
\prod_{\lambda\in \Lambda_1}\left(1-e^{\lambda}D\right) &=\prod_{j\in J}\left(1-e^{\lambda_j}D\right)\\
&=\sum_{k=0}^{2r}(-1)^k \chi(\Lambda^k \operatorname{res} W_1^{(1)}) D^k \\
&=\sum_{k=0}^{2r}(-1)^k C_k^{(1)} D^k.
\end{aligned}
$$
The image of the $L$-operator (\ref{Ld}) under the restriction map is
$$\begin{aligned}
\operatorname{res} L(u)&=\frac{\prod_{\lambda\in \Lambda_1}\left(1-e^{\lambda}D\right)}{1-D^2}\\
&=\frac{\sum_{k=0}^{2r}(-1)^k C_k^{(1)} D^k}{1-D^2}
\end{aligned}.
$$
From (\ref{linvd}) we obtain
$$\operatorname{res} L(u)^{-1}=\sum_{m=0}^{\infty}Q_{m}^{(1)}D^m=\frac{1-D^2}{\prod_{\lambda\in \Lambda_1}\left(1-e^{\lambda}D\right)}.
$$
Finally we see that
$$
\frac{1}{(1-D^2)}\left(\sum_{k=0}^{2r}(-1)^k C_{k}^{(1)}D^k\right)\left(\sum_{m=0}^{\infty}Q_{m}^{(1)}D^m\right)=1
$$
which implies (\ref{CQ}).
\end{proof}
% \begin{corollary}
% $$
% \operatorname{res}W_{m}^{(1)}\oplus \operatorname{Sym}^{m-2} \operatorname{res}W_{1}^{(1)}=\operatorname{Sym}^m \operatorname{res}W_{1}^{(1)}
% $$
% \end{corollary}
\begin{remark}
It is a simple exercise to check that
\begin{itemize}
\item $C_0^{(1)}= 1$,
\item $C_1^{(1)}=q_1$,
\item $C_k^{(1)}=q_k-q_{k-2}$ for $k=2,\cdots,r-2$,
\item $C_{r-1}^{(1)}=q_{r-1}q_r-q_{r-3}$, 
\item $C_r^{(1)}=q_{r-1}^2+q_r^2-2q_{r-2}$,
\item $C_k^{(1)}=C_{2r-k}^{(1)}$ for $k=r+1,\cdots,2r$
\end{itemize}
where $q_0=q_{r+1}=1$. The linear recurrence relation (\ref{CQ}) with these coefficients is obtained in Theorem 3.2 of \cite{2009arXiv0905.3776N}.
\end{remark}
%added
By the same reasoning as in Conjecture \ref{Aconj}, we have the following :
\begin{conjecture}
Let $\mathfrak{g}$ of type $D_r$. For $a=r-1$ or $a=r$, Conjectures \ref{mainconj} and \ref{mainconj2} hold with
\begin{itemize}
\item $\Lambda_a=\text{the set of weights of }\operatorname{res} W^{(a)}_1$,
\item $\ell_a=\dim W^{(a)}_1$,
\item $C_k^{(a)}=\chi\left(\Lambda^k(\operatorname{res} W^{(a)}_1)\right)$ for $k=0,\cdots,\ell_a$.
\end{itemize}
\end{conjecture}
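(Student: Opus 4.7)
The plan is to mirror the strategy used for $a=1$ by constructing a difference $L$-operator adapted to the spinor node and reading off the recurrence from its inverse. For type $D_r$ with $a = r-1$ or $a = r$, the module $\operatorname{res} W^{(a)}_1$ is a half-spin representation of dimension $2^{r-1}$, all of whose weights form a single Weyl orbit with no zero weight. This strongly suggests that the right candidate for the $L$-operator is a polynomial in $D$ of degree $2^{r-1}$ (so $t_a = 1$ and $\Lambda'_a = \emptyset$), whose factorization in (\ref{factor}) is indexed by the weights of the spin representation.

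Concretely, I would first try to construct
\[
L^{(a)}(u) \;=\; \prod_{j \in J_a}^{\longleftarrow}\bigl(1 - z^{(a)}_j(u)D\bigr),
\]
where $J_a$ indexes the weights of $\operatorname{res} W^{(a)}_1$ and each $z^{(a)}_j(u)$ is a monomial in $Y^{\pm1}_{b,q^s}$ read off from the known $q$-character of $W^{(a)}_1$ with an appropriate shift in the spectral parameter. The ordering of the factors would be dictated by the crystal structure on the spin representation (equivalently, by the combinatorics of Kashiwara--Nakashima spin tableaux), chosen so that the non-commutative product collapses correctly. I would then verify, by induction on $r$ and by matching coefficients of $D^m$ against the known $q$-character of $W^{(a)}_m$, that
\[
L^{(a)}(u)^{-1} \;=\; \sum_{m \ge 0} T^{(a)}_m(u+\text{shift})\, D^{m},
\]
which is the analogue of (\ref{linvd}). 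Expanding $L^{(a)}(u)$ itself as a polynomial in $D$ would then give its $k$-th coefficient as the $q$-character of $\Lambda^k W^{(a)}_1$, and applying the restriction map to $L^{(a)}(u) \cdot L^{(a)}(u)^{-1} = 1$ would yield the linear recurrence together with the identification $C_k^{(a)} = \chi\bigl(\Lambda^k(\operatorname{res} W^{(a)}_1)\bigr)$, exactly as in the proofs already presented for $a = 1$.

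The main obstacle is the construction of the $L$-operator itself. For the vector representations treated in the excerpt, the $L$-operator is built from a small collection of monomials along a linear chain with an obvious ordering. For a half-spin representation the $2^{r-1}$ weights have a more subtle combinatorial structure, and the ordering of the factors that makes the product-form collapse is nonobvious. A plausible route is to exploit the fact that the spin representation arises as a fusion/quotient of a tensor power of the vector representation, so that a spinor $L$-operator could in principle be produced by a fusion procedure from the type-$D_r$ vector $L$-operator (\ref{Ld}); this parallels the spinor-type Baxter $Q$-operators appearing in the integrable-systems literature. A secondary, milder obstacle is showing that $\ell_a = 2^{r-1}$ is actually minimal rather than just an upper bound; this should follow from a genericity argument, for instance by specializing the $q_b$ to values for which the $2^{r-1}$ weights $e^{\lambda_j}$ are pairwise distinct complex numbers and invoking a Vandermonde-type non-degeneracy.
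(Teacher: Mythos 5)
The statement you set out to prove is not proved in the paper at all: it is offered there as a conjecture, motivated ``by the same reasoning as in Conjecture \ref{Aconj}'', i.e.\ by numerical experiments indicating $C_1^{(a)}=Q_1^{(a)}$ and $\ell_a=\dim W^{(a)}_1$; no spinor $L$-operator is constructed, and the paper explicitly says such operators are known only for certain $\mathfrak{g}$ and $a$. Your proposal follows the template of the $a=1$ theorems, but its one essential ingredient --- a factorized difference $L$-operator attached to the half-spin node whose inverse expands into the $q$-characters $T^{(a)}_m$ --- is exactly the missing object, and appealing to a hoped-for fusion of (\ref{Ld}) or to an ordering ``dictated by the crystal structure'' does not supply it. As written, this is a research programme rather than a proof.

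Moreover, the specific form you posit would fail for $r\ge 4$. If $L^{(a)}(u)$ were a pure product of the $2^{r-1}$ linear factors $(1-z^{(a)}_j(u)D)$ with $L^{(a)}(u)^{-1}=\sum_{m\ge 0}T^{(a)}_m(u+\text{shift})\,D^m$, then applying $\operatorname{res}$ would give $\sum_{m\ge 0}Q^{(a)}_m D^m=1/\prod_{\lambda\in\Lambda_a}(1-e^{\lambda}D)$, and specializing every $y_b\mapsto 1$ would force $\dim W^{(a)}_m=\binom{m+2^{r-1}-1}{2^{r-1}-1}$, i.e.\ polynomial growth of degree $2^{r-1}-1$. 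But for the spin nodes $\operatorname{res}W^{(a)}_m\cong L(m\omega_a)$, whose dimension grows like $m^{r(r-1)/2}$ (Appendix \ref{dims}), and $r(r-1)/2<2^{r-1}-1$ as soon as $r\ge 4$. Hence the generating function (\ref{Qgf}) must carry a nontrivial numerator, and any factorized $L$-operator for these nodes must contain factors beyond the $2^{r-1}$ linear ones, exactly as $(1+z_0(u)D)^{-1}$ in (\ref{Lb}) and $(1-z_r(u)z_{\overline{r}}(u+2)D^2)^{-1}$ in (\ref{Ld}) produce the numerators in (\ref{genQb}) and (\ref{genQd}); the $E_6$ formula (\ref{gene6}) exhibits the same phenomenon. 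This does not contradict the conjecture itself, which concerns only the minimal recurrence (the denominator), but it does mean that your claimed inverse identity --- the step on which the whole derivation of (\ref{CQ}) rests --- is false as stated, and that minimality of $\ell_a=2^{r-1}$ would additionally require showing the numerator shares no factor with $\prod_{\lambda\in\Lambda_a}(1-e^{\lambda}D)$, which a Vandermonde-type genericity argument alone does not provide.
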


\section{Linear recurrence relations in exceptional types}\label{samples}
In this section we provide a conjectural description of $\Lambda_a,\Lambda'_a,\ell_a$ and $C_k^{(a)},\, k=0,\cdots, \ell_a$ in Conjectures \ref{mainconj} and \ref{mainconj2} for some $a\in I$ in exceptional types. We hope that it gives us some hints on how to construct difference $L$-operators in these types if there is any.

\subsection{Type $E_6$}\label{e6eg}
We briefly considered this case in Example \ref{e6ex} for $a=1$. The smallest nontrivial irreducible representation of $E_6$ is $L(\omega_1)$ of dimension 27 and all the weights have multiplicity 1.

For $a=1$, we expect that Conjectures \ref{mainconj} and \ref{mainconj2} hold with
\begin{itemize}
\item $\Lambda_1=\text{the set of weights of } L(\omega_1)$,
\item $\ell_1=27$,
\item $C_k^{(1)}=\chi(\Lambda^k \operatorname{res} W_1^{(1)})$ for $k=0,\cdots, 27$.
\end{itemize}
We also have
\begin{equation}
\sum_{m=0}^{\infty}Q_{m}^{(1)}D^m=\frac{c_0+c_1D+\cdots+c_{15}D^{15}}{\prod_{\lambda \in \Lambda_1}(1-e^{\lambda} D)}\label{gene6}
\end{equation}
where $c_n$ is given as
$$
\begin{array}{c|c||c|c}
 n & c_n & n & c_n\\ \hline
 0 & 1 & 15 & 1\\ \hline
 1 & 0 & 14 & 0\\ \hline
 2 & -\chi \left(L\left(\omega _5\right)\right) & 13 & -\chi \left(L\left(\omega _1\right)\right) \\ \hline
 3 & \chi \left(L\left(\omega _6\right)\right) & 12 & \chi \left(L\left(\omega _6\right)\right) \\ \hline
 4 & 0 & 11 & 0 \\ \hline
 5 & -\chi \left(L\left(\omega _2\right)\right) & 10 & -\chi \left(L\left(\omega _4\right)\right) \\ \hline
 6 & \chi \left(L\left(\omega _1+\omega _5\right)\right) & 9 & \chi \left(L\left(\omega _1+\omega _5\right)\right) \\ \hline
 7 & -\chi \left(L\left(2 \omega _5\right)\right) & 8 & -\chi \left(L\left(2 \omega _1\right)\right)
\end{array}.
$$
As $\operatorname{res}W_{m}^{(1)}=L(m\omega_1)$, (\ref{gene6}) can be understood entirely within the context of the representation theory of $\mathfrak{g}$.

Note that 
$$
\begin{aligned}
&\dim L(\omega_5)=\dim L(\omega_1)&=27, \\
&\dim L(\omega_6)&=78, \\
&\dim L(\omega_2)=\dim L(\omega_4)&=351, \\
&\dim L(\omega_1+\omega_5)&=650, \\
&\dim L(2\omega_5)=\dim L(2\omega_1)&=351.
\end{aligned}
$$
We see that (\ref{gene6}) is consistent with the generating function of dimensions of the Kirillov-Reshetikhin modules :
$$
\begin{aligned}
\sum_{m=0}^{\infty}(\dim W_{m}^{(1)})D^m  &=\frac{1-27 D^2+78 D^3+\cdots+78 D^{12}-27 D^{13}+D^{15}}{(1-D)^{27}}\\
&=\frac{1+10 D+28 D^2+28 D^3+10 D^4+D^5}{(1-D)^{17}}\\
&=1+27 D+351 D^2+3003 D^3+19305 D^4+\cdots.
\end{aligned}
$$

\subsection{Type $E_7$}
The smallest nontrivial irreducible representation of $E_7$ is given by $L(\omega_6)$ of dimension 56 and all the weights have multiplicity 1.

For $a=6$, we expect that Conjectures \ref{mainconj} and \ref{mainconj2} hold with
\begin{itemize}
\item $\Lambda_6=\text{the set of weights of }L(\omega_6)$,
\item $\ell_6=56$,
\item $C_k^{(6)}=\chi(\Lambda^k \operatorname{res} W_1^{(6)})$ for $k=0,\cdots, 56$.
\end{itemize}
In particular, we expect
$$
C_1^{(6)}=\sum_{\lambda \in \Lambda_6}e^{\lambda}=\chi(L(\omega_6))=q_6.
$$

\subsection{Type $E_8$}
The smallest nontrivial irreducible representation of $E_8$ is given by $L(\omega_7)$ of dimension 248. As it is isomorphic to the adjoint representation, we know that the weights are given by 240 distinct roots and $0$ with multiplicity 8.

For $a=7$, we expect that Conjectures \ref{mainconj} and \ref{mainconj2} hold with
\begin{itemize}
\item $\Lambda_7=\text{the set of weights of }L(\omega_7)$,
\item $\ell_7=241$.
\end{itemize}
Note that Conjecture \ref{mainconj2} implies
$$
\sum_{k=0}^{241}(-1)^k C_k^{(7)} D^k=\prod_{\lambda \in \Lambda_7}(1-e^{\lambda}D)
$$
and in particular,
$$
C_1^{(7)}=\sum_{\lambda \in \Lambda_7}e^{\lambda}=\chi(L(\omega_7))-7=q_7-8.
$$

\subsection{Type $F_4$}
We have $\dim L(\omega_1)=52$ and $\dim L(\omega_4)=26$. In order to describe $\Lambda_a$ and $\Lambda'_a$ for $a=1,4$, we first check that all non-trivial terms in the monomial expansions of $\chi(L(\omega_1))-\chi(L(\omega_4))-1$ and $\chi(L(\omega_4))-2$ as elements of $\mathbb{Z}[y^{\pm}_{a}]_{a\in I}$ have 1 as their coefficients. Thus we can define sets $\Lambda_1$ and $\Lambda_4$ of weights such that
$$
\sum_{\lambda\in \Lambda_1}e^{\lambda}=\chi(L(\omega_1))-\chi(L(\omega_4))-1
$$
and
$$
\sum_{\lambda\in \Lambda_4}e^{\lambda}=\chi(L(\omega_4))-2.
$$

For $a=1$, we expect that Conjectures \ref{mainconj} and \ref{mainconj2} hold with
\begin{itemize}
\item $\Lambda_1$ as above,
\item $\ell_1=25$.
\end{itemize}
Conjecture \ref{mainconj2} implies
$$
\sum_{k=0}^{25}(-1)^kC_k^{(1)}D^k=\prod_{\lambda\in \Lambda_1}(1-e^{\lambda}D)
$$
and in particular,
$$C_1^{(1)}=\sum_{\lambda\in \Lambda_1}e^{\lambda}=\chi(L(\omega_1))-\chi(L(\omega_4))-1=q_1-q_4-2.$$ 

For $a=4$, 
we expect that Conjectures \ref{mainconj} and \ref{mainconj2} hold with
\begin{itemize}
\item $\Lambda_4$ as above,
\item $\Lambda'_4=\Lambda_1$,
\item $\ell_4=74$.
\end{itemize}
Conjecture \ref{mainconj2} implies
$$
\sum_{k=0}^{74}(-1)^kC_k^{(1)}D^k=\prod_{\lambda\in \Lambda_2}(1-e^{\lambda}D)\prod_{\lambda\in \Lambda'_2}(1-e^{\lambda}D^2)
$$
and in particular,
$$
C_1^{(4)}=\chi(L(\omega_4))-2=q_4-2.
$$

\subsection{Type $G_2$}
Note that $\dim L(\omega_1)=14$ and $\dim L(\omega_2)=7$.

For $a=1$, we expect that Conjectures \ref{mainconj} and \ref{mainconj2} hold with
\begin{itemize}
\item $\Lambda_1=\{\omega_1,-\omega_1,\omega_1-3\omega_2,-\omega_1+3\omega_2,2\omega_1-3\omega_2,-2\omega_1+3\omega_2,0\}$,
\item $\ell_1=7$.
\end{itemize}
Conjecture \ref{mainconj2} implies
$$
\sum_{k=0}^{7}(-1)^k C_k^{(1)} D^k=\prod_{\lambda \in \Lambda_1}(1-e^{\lambda}D)
$$
and in particular,
$$
C_1^{(1)}=\sum_{\lambda \in \Lambda_1}e^{\lambda}=\chi\left(L(\omega_1) \right)-\chi\left( L(\omega_2) \right)=q_1-q_2-1.
$$
We also have
$$
\sum_{m=0}^{\infty}Q_{m}^{(1)}D^m=\frac{1+c_1D+c_2D^2+D^3}{\prod_{\lambda \in \Lambda_1}(1-e^{\lambda}D)}
$$
where $c_1=c_2=\chi\left(L(\omega_2)\right)+1=q_2+1$. And again we check its consistency by computing the dimension generating function from it :
$$
\begin{aligned}
\sum_{m=0}^{\infty}(\dim W_{m}^{(1)})D^m&=\frac{1+8 D+8 D^2+D^3}{(1-D)^7}\\
&=1+15 D+92 D^2+365 D^3+1113 D^4+\cdots
\end{aligned}
$$
which shows a good agreement.

For $a=2$, we expect that Conjectures \ref{mainconj} and \ref{mainconj2} hold with
\begin{itemize}
\item $\Lambda_2=\{\omega_2,-\omega_2,\omega_1-\omega_2,-\omega_1+\omega_2,\omega_1-2\omega_2,-\omega_1+2\omega_2\}$,
\item $\Lambda'_2=\Lambda_1$,
\item $\ell_2=27$.
\end{itemize}
Conjecture \ref{mainconj2} implies
$$
\sum_{k=0}^{27}(-1)^kC_k^{(2)}D^k=\prod_{\lambda\in \Lambda_2}(1-e^{\lambda}D)\prod_{\lambda\in \Lambda'_2}(1-e^{\lambda}D^3)
$$
and in particular,
$$
C_1^{(2)}=\sum_{\lambda \in \Lambda_2}e^{\lambda}=\chi\left(L(\omega_2) \right)-1=q_2-1.
$$

\section*{Acknowledgements}
The author wishes to thank Atsuo Kuniba for pointing out references on difference $L$-operators.%thanks to the funding to the front

\appendix
\section{On the order of linear recurrence relations in Conjecture \ref{mainconj}}\label{ordell}
In this section, we give tables for $\ell_a$ in Conjecture \ref{mainconj}. It is convenient to introduce two sets of binomial like coefficients $L_{m,n}$ and $M_{m,n}$ for $m,n\in \mathbb{Z}$ with $0\leq n \leq m$.

Let us define $L_{m,n}$ for $m,n\in \mathbb{Z},\, 0\leq n \leq m$ recursively by
%$$
%L_{m,n}=2L_{m-1,n-1}+L_{m-1,n}.
%$$
$$
L_{m,n}=2L_{m-1,n-1}+L_{m-1,n}
$$
with $L_{m,0}=1$ and $L_{m,m}=(3^m + 1)/2$. If we arrange them like Pascal's triangle, we obtain the following array :
$$
\begin{array}{cccccccccccc}
&    &    &    &    &  1\\
&    &    &    &  1 &    &  2\\
&    &    &  1 &    &  4 &    &  5\\
&    &  1 &    &  6 &    &  13 &    &  14\\
&  1 &    &  8 &    &  25 &    &  40 &    &  41\\
\cdots &    &  \cdots  &    & \cdots &     & \cdots  &    & \cdots & & \cdots 
\end{array}
$$

Similarly we define $M_{m,n}$ for $m,n\in \mathbb{Z},\, 0\leq n \leq m$ by
$$
M_{m,n}=2M_{m-1,n-1}+M_{m-1,n}
$$
with $M_{m,0}=1$ and $M_{m,m}=2\cdot 3^m-2^m$. If we arrange them like Pascal's triangle, we obtain the following array :
$$
\begin{array}{cccccccccccc}
&    &    &    &    &  1\\
&    &    &    &  1 &    &  4\\
&    &    &  1 &    &  6 &    &  14\\
&    &  1 &    &  8 &    &  26 &    &  46\\
&  1 &    &  10 &    &  42 &    &  98 &    &  146\\
\cdots &    &  \cdots  &    & \cdots &     & \cdots  &    & \cdots & & \cdots 
\end{array}
$$
%2*(3^n)-2^n
%1, 4, 14, 46, 146, 454, 1394, 4246, 12866, 38854, 117074...

The minimal order $\ell_a$ in Conjecture \ref{mainconj} is given as follows :

In type $A_r, \quad r\geq 1$, 
$$\ell_a=\binom{r+1}{a} \text{ for $a=1,\cdots,r$}.$$

In type $B_r, \quad r\geq 2$, 
$$
 \ell_a=
\begin{cases} 
 L_{r,a}, & \text{if $a=1,\cdots,r-1$}\\ 
 3^r-2^r+1, & \text{if $a=r$}.
\end{cases}
$$

In type $C_r, \quad r\geq 2$, 
$$
 \ell_a=
\begin{cases} 
 M_{r,a}, & \text{if $a=1,\cdots,r-1$}\\ 
 2^{r}, & \text{if $a=r$}.
\end{cases}
$$

In type $D_r, \quad r\geq 3$, 
$$
 \ell_a=
\begin{cases} 
 L_{r,a}, & \text{if $a=1,\cdots,r-2$}\\ 
 2^{r-1}, & \text{if $a=r-1,r$}.
\end{cases}
$$

In exceptional types,
$$
\begin{array}{c|l|}
\text{type} & (\ell_1,\cdots, \ell_r) \\
\hline
 E_6 & (27, 243, *, 243, 27, 73) \\
 E_7 & (127, *, *, *, *, 56, *) \\
 E_8 & (*,*,*,*,*,*,241,*) \\
 F_4 & (25, *, *, 74) \\
 G_2 & (7, 27)
\end{array}
$$
where the asterisk (*) means that the corresponding $\ell_a$ has not been yet computed successfully. We hope to complete this table in the near future. For readers' convenience we include the tables for classical types up to rank 7 :

In type $A_r, \quad r\geq 1$,
$$
\begin{array}{c|cccccccccc}
 r & \ell_1 & \cdots \\
 \hline
 1 & 2 & \text{} & \text{} & \text{} & \text{} & \text{} & \text{} \\
 2 & 3 & 3 & \text{} & \text{} & \text{} & \text{} & \text{} \\
 3 & 4 & 6 & 4 & \text{} & \text{} & \text{} & \text{} \\
 4 & 5 & 10 & 10 & 5 & \text{} & \text{} & \text{} \\
 5 & 6 & 15 & 20 & 15 & 6 & \text{} & \text{} \\
 6 & 7 & 21 & 35 & 35 & 21 & 7 & \text{} \\
 7 & 8 & 28 & 56 & 70 & 56 & 28 & 8 \\
\end{array}
$$

In type $B_r, \quad r\geq 2$, 
$$
\begin{array}{c|ccccccc}
 r & \ell_1 & \ell_2 & \cdots \\
 \hline
 2 & 4 & 6 & \text{} & \text{} & \text{} & \text{} & \text{} \\
 3 & 6 & 13 & 20 & \text{} & \text{} & \text{} & \text{} \\
 4 & 8 & 25 & 40 & 66 & \text{} & \text{} & \text{} \\
 5 & 10 & 41 & 90 & 121 & 212 & \text{} & \text{} \\
 6 & 12 & 61 & 172 & 301 & 364 & 666 & \text{} \\
 7 & 14 & 85 & 294 & 645 & 966 & 1093 & 2060 \\
\end{array}
$$

In type $C_r, \quad r\geq 2$, 
$$
\begin{array}{c|ccccccc}
 r & \ell_1 & \ell_2 & \cdots \\
 \hline
 2 & 6 & 4 & \text{} & \text{} & \text{} & \text{} & \text{} \\
 3 & 8 & 26 & 8 & \text{} & \text{} & \text{} & \text{} \\
 4 & 10 & 42 & 98 & 16 & \text{} & \text{} & \text{} \\
 5 & 12 & 62 & 182 & 342 & 32 & \text{} & \text{} \\
 6 & 14 & 86 & 306 & 706 & 1138 & 64 & \text{} \\
 7 & 16 & 114 & 478 & 1318 & 2550 & 3670 & 128 \\
\end{array}
$$

In type $D_r, \quad r\geq 3$, 
$$
\begin{array}{c|cccccccc}
 r & \ell_1 & \ell_2 & \ell_3 & \cdots & \text{} & \text{} & \text{} \\
 \hline
 3 & 6 & 4 & 4 & \text{} & \text{} & \text{} & \text{} \\
 4 & 8 & 25 & 8 & 8 & \text{} & \text{} & \text{} \\
 5 & 10 & 41 & 90 & 16 & 16 & \text{} & \text{} \\
 6 & 12 & 61 & 172 & 301 & 32 & 32 & \text{} \\
 7 & 14 & 85 & 294 & 645 & 966 & 64 & 64 \\
\end{array}
$$

\section{On the growth of the dimensions of the Kirillov-Reshetikhin modules}\label{dims}
Let us define a function $p_a:\mathbb{Z}_{\geq 0}\to \mathbb{Z}_{\geq 0}$ by $p_a(m)=\dim W_m^{(a)}$. Conjectures \ref{mainconj} and \ref{mainconj2} imply that $p_a$ is of polynomial growth in the sense that there exists a nonzero limit
$$
\lim_{m\to \infty}\frac{p_a(m)}{m^{e_a}}
$$
for some $e_a\in \mathbb{Z}_{\geq 0}$. This is argued in Theorem 5, \cite{MR1436775} for the simply-laced case. When $t_a=1$, $p_a$ is actually a polynomial. Even when $t_a\neq 1$, $p_a(m)$ can be expressed as a linear combination of terms of the form $m^n\zeta^m,\,n\in \mathbb{Z}_{\geq 0}$ where $\zeta$ denotes a $t_a$-th root of unity. For example, in type $G_2$, we have
$$
p_2(m)=\frac{(6+m) \left(a(m)+b(m)\cos (\frac{2 \pi m}{3})+c(m)\sin (\frac{2 \pi m}{3})\right)}{94478400}
$$
where 
$$
\begin{aligned}
a(m)&=3(4 + m) (5 + m) (6 + m) (7 + m) (8 + m) (715 + 948 m + 367 m^2 + 
   48 m^3 + 2 m^4),\\
b(m)&=240(6 + m) (25 + 12 m + m^2) (37 + 12 m + m^2),\\
c(n)&=160\sqrt{3}(3875 + 2592 m + 648 m^2 + 72 m^3 + 3 m^4).
\end{aligned}
$$

Let us call $e_a$ the degree of $p_a$ and denote it by $\deg p_a$.
%before: Let us call $e_a$ the degree of $p_a$ and denote it by $\deg p^{(a)}$.
Then the degree of $p_a$ is given as follows : 

In type $A_r, \quad r\geq 1$, 
$$\deg p_a=a(r+1-a) \text{ for $a=1,\cdots,r$}.$$

In Type $B_r, \quad r\geq 2$, 
$$
\deg p_a=
\begin{cases} 
 a(2r-a), & \text{if $a=1,\cdots,r-1$}\\ 
 r^2, & \text{if $a=r$}.
\end{cases}
$$

In type $C_r, \quad r\geq 2$, 
$$
\deg p_a=
\begin{cases} 
 a(2r+1-a), & \text{if $a=1,\cdots,r-1$}\\ 
 r(r+1)/2, & \text{if $a=r$}.
\end{cases}
$$

In type $D_r, \quad r\geq 3$, 
$$
\deg p_a=
\begin{cases} 
 a(2r-1-a), & \text{if $a=1,\cdots,r-2$}\\ 
 r(r-1)/2, & \text{if $a=r-1,r$}.
\end{cases}
$$

In exceptional types,
$$
\begin{array}{c|l|}
\text{type} & (\deg p_1,\cdots, \deg p_r) \\
\hline
 E_6 & (16,30,42,30,16,22) \\
 E_7 & (34,66,96,75,52,27,49) \\
 E_8 & (92,182,270,220,168,114,58,136) \\
 F_4 & (16,30,42,22) \\
 G_2 & (6,10)
\end{array}
$$

Having obtained these tables, we found that each $\deg p_a$ is simply given by
$$
\deg p_a=2\sum_{b\in I}(C^{-1})_{ab}.
$$
For example, the Cartan matrix $C$ for type $F_4$ is
$$
C=
\left(
\begin{array}{cccc}
 2 & -1 & 0 & 0 \\
 -1 & 2 & -1 & 0 \\
 0 & -2 & 2 & -1 \\
 0 & 0 & -1 & 2 \\
\end{array}
\right)
$$
and twice its inverse is
$$
2C^{-1}=
\left(
\begin{array}{cccc}
 4 & 6 & 4 & 2 \\
 6 & 12 & 8 & 4 \\
 8 & 16 & 12 & 6 \\
 4 & 8 & 6 & 4 \\
\end{array}
\right).
$$
The sum of the entries in each row is 16,30,42 and 22. 
%We do not have a good explanation for this coincidence.
We can explain this coincidence as follows :
\begin{proposition}
Let $q_{a}:\mathbb{Z}_{\geq 0}\to \mathbb{R}$ be a function of polynomial growth of degree $\deg q_a\in \mathbb{Z}_{\geq 0}$ for each $a\in I$. Suppose that they satisfy the $Q$-system : 
$$
q_{a}(m)^2 = q_{a}({m+1})q_{a}({m-1}) + \prod_{b : C_{ab}\neq 0} \prod_{k=0}^{-C_{a b}-1}q_{b}\left(\lfloor\frac{C_{b a}m - k}{C_{a b}}\rfloor\right)
$$
for each $a\in I, m \ge 1$. Then $\deg q_a=2\sum_{b\in I}(C^{-1})_{ab}$.
\begin{proof}
We can easily check that $q_a(m)^2-q_a(m-1)q_a(m+1)$ grows like $C\cdot m^{2\deg q_a-2}$ as $m\to \infty$ for some non-zero constant $C$. Hence $(\deg q_a)_{a\in I}$ must satisfy the following system of equations
$$
2\deg q_{a}-2=\sum_{b\in I}-C_{ab}\deg q_{b},\quad a\in I
$$
whose unique solution is
$$
\deg q_a=2\sum_{b\in I}(C^{-1})_{ab}.
$$
\end{proof}
\end{proposition}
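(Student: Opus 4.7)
The plan is to extract a linear system for the tuple of degrees $(d_a)_{a \in I}$, where $d_a := \deg q_a$, by matching leading-order polynomial asymptotics across the $Q$-system. Writing $q_a(m) = \alpha_a m^{d_a}(1+O(1/m))$ with $\alpha_a \neq 0$, I would rearrange the $Q$-system as
\[
q_a(m)^2 - q_a(m-1)q_a(m+1) = \prod_{b : C_{ab}\neq 0}\prod_{k=0}^{-C_{ab}-1} q_b\!\left(\bigl\lfloor (C_{ba}m-k)/C_{ab}\bigr\rfloor\right)
\]
and compare the polynomial degrees of the two sides.

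A direct expansion shows that the $m^{2d_a}$ and $m^{2d_a - 1}$ terms cancel between $q_a(m)^2$ and $q_a(m-1)q_a(m+1)$, and the coefficient of $m^{2d_a - 2}$ works out to $d_a \alpha_a^2$ (most quickly seen from the monomial case: $(m^2-1)^{d_a} = m^{2d_a} - d_a m^{2d_a - 2} + O(m^{2d_a - 4})$). Hence, provided $d_a \geq 1$, the LHS has polynomial degree exactly $2 d_a - 2$ with nonzero leading coefficient. On the RHS, for each $b$ adjacent to $a$ in the Dynkin diagram both $C_{ab}$ and $C_{ba}$ are negative integers, so each of the $-C_{ab}$ factors $q_b(\lfloor (C_{ba}m-k)/C_{ab}\rfloor)$ is asymptotic to a nonzero constant times $m^{d_b}$, and the full product (over $k$ and over $b \neq a$) has degree $\sum_{b \neq a}(-C_{ab}) d_b$ with nonzero leading coefficient. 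Equating degrees gives $2 d_a - 2 = \sum_{b \neq a}(-C_{ab}) d_b$, which, using $C_{aa} = 2$, is exactly $Cd = 2\mathbf{1}$; invertibility of the Cartan matrix then yields $d_a = 2\sum_{b \in I}(C^{-1})_{ab}$.

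The one technical point requiring care is the positivity $d_a \geq 1$ used in the LHS asymptotic. If $d_a = 0$ for some $a$, then $q_a(m)^2 - q_a(m-1)q_a(m+1) \to 0$, whereas the RHS is asymptotic to a nonzero constant (or else grows, if some adjacent $d_b > 0$) and cannot tend to zero, contradicting the $Q$-system. So automatically every $d_a \geq 1$, and the asymptotic matching, together with the elementary linear algebra above, concludes the proof. The main subtle step is thus the coefficient computation yielding the factor $d_a$ on the LHS; once that is in hand, the rest is mechanical.
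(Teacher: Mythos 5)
Your proof is correct and follows essentially the same route as the paper: compare the leading asymptotics of $q_a(m)^2-q_a(m-1)q_a(m+1)$ (degree $2\deg q_a-2$) with the degree $\sum_{b\neq a}(-C_{ab})\deg q_b$ of the product side, obtain the linear system $C\,d=2\mathbf{1}$, and invert the Cartan matrix. You simply supply more detail than the paper does (the explicit leading coefficient $d_a\alpha_a^2$ and the check that $d_a\geq 1$), which the paper compresses into ``we can easily check.''
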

\bibliographystyle{amsalpha}
\bibliography{linQ}
\end{document}